\DeclareMathOperator{\minn}{min}
\newtheorem{counter}{}
\theoremstyle{definition}
\newtheorem{example}[counter]{Example}
\newtheorem{remark}[counter]{Remark}
\theoremstyle{plain}
\newtheorem{corollary}[counter]{Corollary}
\newtheorem{lemma}[counter]{Lemma}
\newtheorem{proposition}[counter]{Proposition}
\newtheorem{theorem}[counter]{Theorem}
\begin{document}

  \title{On Abstract Spectral Constants}
		\author{Felix L.~Schwenninger}
  \address{Department of Applied Mathematics\\
        University of Twente, P.O.~Box 217\\7500 AE Enschede, The Netherlands}
       \email{f.l.schwenninger@utwente.nl}
        \author{Jens de Vries}
        \address{Department of Applied Mathematics\\
        University of Twente, P.O.~Box 217\\ 
        7500 AE Enschede, The Netherlands}
         \email{j.devries-4@utwente.nl}

\thanks{The second named author has been supported by the Dutch Research Council (NWO) grant OCENW.M20.292.}

	\begin{abstract}
 We prove bounds for a class of unital homomorphisms arising in the study of spectral sets, by involving extremal functions and vectors. These are used to recover three celebrated results on spectral constants by Crouzeix--Palencia, Okubo--Ando and von Neumann in a unified way and to refine a recent result by Crouzeix--Greenbaum. 
	\end{abstract}
\maketitle

\section{Introduction}
Let $M$ be a bounded linear operator on a complex Hilbert space $H$ and $W$ a bounded subset of $\mathbb{C}$. Estimates of the form
\begin{align}\label{spectral estimate}
    \|p(M)\| \leq \kappa\sup_{z\in W} |p(z)|, 
\end{align}
for some positive constant $\kappa$ and all complex polynomials $p$, 
are classical, and appear for instance in von Neumann's inequality or Crouzeix's conjecture. Such $W$ is called a \textit{$\kappa$-spectral set}\footnote{In the literature a $\kappa$-spectral set is typically instead defined by requiring that (\ref{spectral estimate}) holds for all rational functions $p$ with poles off $W^{-}$ and such that the spectrum of $M$ is contained in $W$. For the applications we have in mind, considering polynomials is however sufficient.} for $M$ and $\kappa$ is referred to as a \textit{spectral constant} for $M$. In this article we present an abstract framework to derive spectral constants in a rather direct way. In particular, this approach allows for unified proofs of several well-known results in this context.

    More precisely, we study norms of unital bounded homomorphisms 
    \begin{align*}
        \gamma\colon A\to \mathscr{L}(H),
    \end{align*}
    where $A$ is unital uniform algebra on some compact Hausdorff space $K$---that is, a closed unital subalgebra of the continuous complex-valued functions $C(K)$---and $\mathscr{L}(H)$ denotes the space of bounded linear operators on $H$.
	   Our interest in norms of such mappings stems from  spectral constants appearing in functional calculi for holomorphic functions. Let $\Omega\subset\mathbb{C}$ be a bounded open subset with smooth boundary. If we write $A(\Omega)$ for the space of all continuous functions from the closure $\Omega^{-}$ to $\mathbb{C}$ that are holomorphic on $\Omega$, then $A(\Omega)$ can be viewed as a unital uniform algebra acting on the boundary $\Omega^{\partial}$ by the maximum modulus principle. Suppose that $M$ is a bounded linear operator on $H$ whose spectrum is contained in $\Omega$. The latter guarantees that for each $f\in A(\Omega)$ the integral
	\begin{align*}
		\gamma(f):=\frac{1}{2\pi i}\int_{\Omega^{\partial}}f(\sigma)(\sigma-M)^{-1} \ \mathrm{d}\sigma
	\end{align*} 
	is well-defined. For the rest of the paper we will refer to the linear map $\gamma\colon A(\Omega)\to\mathscr{L}(H)$ as the (Dunford--Riesz) \textit{functional calculus} of $M$ on $\Omega$. It is well-known that $\gamma$ is a unital bounded homomorphism such that $\gamma(p)=p(M)$ for polynomials $p$, see e.g.\ \cite{dunfordschwartzVol1}, where $p$ is interpreted as a function on $\Omega$ in the obvious way. Clearly, bounds on the norm of $\gamma$ provide spectral constants for $M$. We note in passing that the above homomorphism actually extends to the algebra of holomorphic functions on $\Omega$, equipped with the topology of uniform convergence on compact sets, see e.g.\ \cite{haase2018lectures}. 
	
	Let us now return to the general setting. Note that we always have the lower bound $\|\gamma\|\geq1$ as $\gamma(1)=1$. Our starting point for finding an upper bound for the norm of $\gamma$ is the assumption that the norm of $\gamma$ is attained. More precisely, a couple $(f_{0},x_{0})\in A\times H$ with $\|f_{0}\|_{K}=1$ and $\|x_{0}\|=1$ is called an \textit{extremal pair} for $\gamma$ if $\|\gamma\|=\|\gamma(f_{0})x_{0}\|$. While the existence of extremal pairs may seem restrictive, the fact that their existence is e.g.\ guaranteed for functional calculi $\gamma$ on finite-dimensional spaces $H$ if $\Omega$ is convex, see \cite[Theorem 2.1]{crouzeix2004bounds} is typically sufficient to find spectral constants for general $H$. The main advantage of extremal pairs $(f_{0},x_{0})$ is that they translate the operator norm $\|\gamma\|$ into a Hilbert space norm $\|\gamma(f_{0})x_{0}\|$, which is a fruitful tool that we shall heavily exploit. Moreover, we use recently introduced \textit{extremal measures} associated with an extremal pair, \cite{bickel2020crouzeix}. We emphasize that we are merely working with the existence of an extremal pair rather than assuming explicit knowledge of $(f_{0},x_{0})$ which would trivially yield the norm of $\gamma$.

 Our research builds on techniques for deriving spectral constants by employing contour integral representations of the functional calculus. These were developed in the last decades, including the influential paper by Delyon--Delyon \cite{delyon1999generalization}, and Crouzeix's celebrated result  showing that the numerical range of any bounded linear operator is an $11.08$-spectral set, see \cite{crouzeix2004bounds} as well as \cite{crouzeix2007numerical}. At the same time Putinar--Sandberg \cite{putinar2005skew} also studied properties of the operator-valued \textit{double-layer potential}, which is crucial in all these references to derive an norm bound on a related ``symmetrized functional calculus''\footnote{This mapping, however, fails to be multiplicative in general.}
 \begin{align}\label{symmmetrizedfunctionalcalculus}
    f\mapsto\gamma(f)+\gamma(\Phi(f))^{*},
  \end{align}
  where $\Phi\colon A(\Omega)\to A(\Omega)$ refers to a suitable antilinear transformation. See also the recent work on finite-dimensional dilations in \cite[Theorem 1.6]{hartz2021dilation}.
The more recent refinement of Crouzeix's theorem due to Crouzeix--Palencia \cite{crouzeix2017numerical}, see also \cite{ransford2018remarks}, exploits more properties of the double-layer potential, and of the specific $\Phi$, yielding that the numerical range is even a $(1+\sqrt{2})$-spectral set. As shown in \cite{ransford2018remarks}, their proof technique cannot yield any better constant, which implies that yet more structure of the problem has to be used to possibly show Crouzeix's conjecture stating that the numerical range is a $2$-spectral set. 

Under additional assumptions, the bound $1+\sqrt{2}$ can indeed be lowered. Indeed, if $\Omega$ is a disk containing the numerical range, then the optimal spectral constant $2$ can be proved by slightly diverging from Crouzeix--Palencia's proof and instead exploiting properties of extremal pairs, see \cite{caldwell2018some}. The result, however, is much older as it traces back to Okubo--Ando and dilation theory, see \cite{okubo1975constants}. Yet, the fact that such contour integral methods can be used to prove sharp spectral constants seems remarkable. Another instance of this is Delyon--Delyon's reasoning for von Neumann's inequality \cite{delyon1999generalization}, see also \cite[Section 4]{crouzeix2019spectral}.

One finding of our current work is that all of the mentioned results indeed follow directly from one abstract theorem. Thereby we aim to provide a very clean mutual relation of existing results around the conjecture. Surely, this is not the first attempt to study the approach to Crouzeix's conjecture in an abstract fashion.  In \cite{ransford2018remarks} and \cite{ostermann2020abstract} an abstract framework was used to discuss limitations of the proof ingredients used by Crouzeix--Palencia and how to possibly overcome those. On the other hand, the links between von Neumann's inequality and statements about the numerical range have also been studied in the past, see \cite{okubo1975constants} and \cite{Pagacz2020}.

Finally, we remark that spectral constants naturally appear in numerical linear algebra, as they can be used to bound residuals appearing in Krylov subspace approximation methods such as the GMRES algorithm, see e.g.\ \cite[Section 7.1]{crouzeix2019spectral}. 
 
\section{Main Result}
	First we discuss some properties of extremal pairs.
	In \cite[Theorem 5.1]{caldwell2018some} the following proposition was proved for $\gamma$ associated to functional calculi. We slightly adapt their argument, and thereby showing that the result also holds for general $\gamma$. 
	\begin{proposition}\label{orthogonal}
		Let $A$ be a unital uniform algebra acting on a compact Hausdorff space $K$, $H$ a Hilbert space and $\gamma\colon A\to \mathscr{L}(H)$ a unital bounded homomorphism. If $(f_{0},x_{0})$ is extremal for $\gamma$, then $(\|\gamma\|^{2}-1)\langle\gamma(f_{0})x_{0},x_{0}\rangle=0$. In particular, if $\|\gamma\|>1$, then $\langle\gamma(f_{0})x_{0},x_{0}\rangle=0$.
	\end{proposition}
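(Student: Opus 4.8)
The plan is to convert the operator-norm extremality of $\gamma$ into a rigidity statement for the single operator $T:=\gamma(f_{0})\in\mathscr{L}(H)$, combining two ingredients: the classical fact that a norm-attaining operator sends a maximizing vector to an eigenvector of $T^{*}T$ ``at the top'' of the spectrum, and a first-order perturbation of the extremal function $f_{0}$ by a disk automorphism.

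First I would set $N:=\|\gamma\|$ and $y:=\gamma(f_{0})x_{0}=Tx_{0}$, so that $\|x_{0}\|=\|f_{0}\|_{K}=1$ and $\|y\|=N$. Since $\|T\|\leq\|\gamma\|\,\|f_{0}\|_{K}=N$ while $\|Tx_{0}\|=N$, the operator $T$ attains its norm at $x_{0}$. The operator $P:=N^{2}I-T^{*}T$ is positive and $\langle Px_{0},x_{0}\rangle=N^{2}-\|Tx_{0}\|^{2}=0$, so the standard argument (e.g.\ via $P^{1/2}$, or Cauchy--Schwarz for the form $\langle P\cdot,\cdot\rangle$) gives $Px_{0}=0$, that is,
\[
 T^{*}y=T^{*}Tx_{0}=N^{2}x_{0}.
\]
This identity will provide the crucial link between $\langle\gamma(f_{0})x_{0},x_{0}\rangle$ and the extremality.

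For the perturbation, recall that for $|a|<1$ the disk automorphism $\varphi_{a}(z)=(z-a)/(1-\bar{a}z)$ maps $\overline{\mathbb{D}}$ into itself; since $\|f_{0}\|_{K}=1$, the element $g_{a}:=(f_{0}-a\mathbf{1})(\mathbf{1}-\bar{a}f_{0})^{-1}$ is then a well-defined member of the unital Banach algebra $A$ (the inverse exists because $\|\bar{a}f_{0}\|_{K}<1$) with $\|g_{a}\|_{K}\leq 1$. As $\gamma$ is a unital homomorphism, $\gamma(g_{a})(I-\bar{a}T)=T-aI$, hence $\gamma(g_{a})=(T-aI)(I-\bar{a}T)^{-1}$ as soon as $|a|<1/N$, and therefore $\|\gamma(g_{a})x_{0}\|\leq\|\gamma\|\,\|g_{a}\|_{K}\leq N$ for all such $a$. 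Expanding $(I-\bar{a}T)^{-1}x_{0}=x_{0}+\bar{a}y+O(|a|^{2})$ gives $\gamma(g_{a})x_{0}=y+\bar{a}\,Ty-a\,x_{0}+O(|a|^{2})$, so
\[
 \|\gamma(g_{a})x_{0}\|^{2}=N^{2}+2\,\re\!\big(a(\langle y,Ty\rangle-\langle x_{0},y\rangle)\big)+O(|a|^{2}).
\]
Since the left-hand side is $\leq N^{2}$ for all small $a\in\mathbb{C}$, the linear term must vanish, i.e.\ $\langle y,Ty\rangle=\langle x_{0},y\rangle$. Substituting $\langle y,Ty\rangle=\langle T^{*}y,y\rangle=N^{2}\langle x_{0},y\rangle$ from the first step yields $(N^{2}-1)\langle x_{0},y\rangle=0$, and since $\langle x_{0},y\rangle=\overline{\langle\gamma(f_{0})x_{0},x_{0}\rangle}$ this is exactly $(\|\gamma\|^{2}-1)\langle\gamma(f_{0})x_{0},x_{0}\rangle=0$; the ``in particular'' clause is then immediate.

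I expect the only points that require any care to be (i) verifying that $g_{a}$ is genuinely an element of $A$ with $\|g_{a}\|_{K}\leq 1$ --- which is precisely where the uniform-algebra structure and the fact that $\varphi_{a}$ maps $\overline{\mathbb{D}}$ into itself enter --- and (ii) transporting the algebraic inversion through the homomorphism $\gamma$ (so that $\gamma(g_{a})$ has the claimed form for small $a$); the first-order expansion and the vanishing-of-the-linear-term argument are otherwise routine.
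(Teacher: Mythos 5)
Your proof is correct and rests on the same central device as the paper's: perturb $f_{0}$ by a disk automorphism $g_{a}=(f_{0}-a)/(1-\bar a f_{0})$, use that $\|g_{a}\|_{K}\leq 1$ to get $\|\gamma(g_{a})x_{0}\|\leq\|\gamma\|$, and extract information from this constraint. The organization, however, is different in two respects. First, you invoke the norm-attaining rigidity statement $T^{*}Tx_{0}=\|\gamma\|^{2}x_{0}$ (which is exactly the paper's Lemma~\ref{C*-like identity}, there deferred to the proof of Proposition~\ref{existence extremal measure}) to convert $\langle y,Ty\rangle$ into $\|\gamma\|^{2}\langle x_{0},y\rangle$; the paper's proof never uses this fact. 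Second, you carry out a genuine first-order expansion of $(I-\bar a T)^{-1}x_{0}$ in $a$ and argue by vanishing of the linear coefficient over all small complex $a$, whereas the paper never expands the resolvent: it inserts $(1-a_{n}^{*}\gamma(f_{0}))^{-1}(1-a_{n}^{*}\gamma(f_{0}))$, identifies the factor $\gamma(g_{n})$, and then computes $\|(\gamma(f_{0})-a_{n})x_{0}\|^{2}$ and $\|(1-a_{n}^{*}\gamma(f_{0}))x_{0}\|^{2}$ exactly, working with a specific phase $\lambda$ and the sequence $a_{n}=\lambda/(2n)$ so that the final inequality $(\|\gamma\|^{2}-1)|\langle\gamma(f_{0})x_{0},x_{0}\rangle|\leq(\|\gamma\|^{4}-1)/(4n)$ is already in closed form. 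Your version is arguably more conceptual (a variational ``stationarity'' argument at the extremizer, coupled with the eigenvector property of maximizing vectors), while the paper's is more elementary and self-contained since it avoids both the adjoint identity and the Neumann series; both yield the same conclusion. One minor technical point you handle correctly but should keep in mind: you need the smallness condition $|a|<1/\|\gamma\|$ (not just $|a|<1$) so that $I-\bar a T$ is invertible on $H$, since $\|T\|=\|\gamma\|$ may exceed $1$.
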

	\begin{proof}
		Fix $\lambda\in\mathbb{C}$ with $|\lambda|=1$ such that $\lambda^{*}\langle\gamma(f_{0})x_{0},x_{0}\rangle=|\langle\gamma(f_{0})x_{0},x_{0}\rangle|$. For each $n\in\mathbb{N}$ we define
		\begin{align*}
			a_{n}:=\frac{\lambda}{2n},\qquad g_{n}:=\frac{f_{0}-a_{n}}{1-a_{n}^{*}f_{0}}.
		\end{align*}
		Since $\|g_{n}\|_{K}\leq1$, it follows that
		\begin{align*}
			\|(\gamma(f_{0})-a_{n})x_{0}\|&=\|(\gamma(f_{0})-a_{n})(1-a_{n}^{*}\gamma(f_{0}))^{-1}(1-a_{n}^{*}\gamma(f_{0}))x_{0}\|\\
			&=\|\gamma(g_{n})(1-a_{n}^{*}\gamma(f_{0}))x_{0}\|\\
			&\leq\|\gamma\|\|(1-a_{n}^{*}\gamma(f_{0}))x_{0}\|.
		\end{align*}
		Squaring both sides and using that $(f_{0},x_{0})$ is extremal for $\gamma$ yields
		\begin{align*}
			\|\gamma\|^{2}-\frac{|\langle\gamma(f_{0})x_{0},x_{0}\rangle|}{n}+\frac{1}{4n^{2}}\leq\|\gamma\|^{2}\bigg(1-\frac{|\langle\gamma(f_{0})x_{0},x_{0}\rangle|}{n}+\frac{\|\gamma\|^{2}}{4n^{2}}\bigg)
		\end{align*}
		and therefore, after rearranging,
		\begin{align*}
			(\|\gamma\|^{2}-1)|\langle\gamma(f_{0})x_{0},x_{0}\rangle|\leq\frac{\|\gamma\|^{4}-1}{4n}.
		\end{align*}
		The desired result follows as the last inequality holds for all $n\in\mathbb{N}$.
	\end{proof}
	We will use the following elementary fact in the proof of the subsequent proposition.
	\begin{lemma}\label{C*-like identity}
		Let $M$ be a bounded linear operator on a Hilbert space $H$. If $x_{0}$ is a unit vector in $H$ on which $M$ attains its norm, that is $\|M\|=\|Mx_{0}\|$, then $M^{*}Mx_{0}=\|M\|^{2}x_{0}$.
	\end{lemma}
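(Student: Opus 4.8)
The plan is to reduce everything to the equality case of the Cauchy--Schwarz inequality. Write $c:=\|M\|$, so that $\|Mx_{0}\|=c$ and $\|x_{0}\|=1$. I would first record the two ingredients: on the one hand $\langle M^{*}Mx_{0},x_{0}\rangle=\langle Mx_{0},Mx_{0}\rangle=\|Mx_{0}\|^{2}=c^{2}$; on the other hand $\|M^{*}Mx_{0}\|\leq\|M^{*}M\|\,\|x_{0}\|=\|M\|^{2}=c^{2}$, where I use the $C^{*}$-identity $\|M^{*}M\|=\|M\|^{2}$ for operators on Hilbert space. Thus the vector $M^{*}Mx_{0}$ has norm at most $c^{2}$ yet its inner product against the unit vector $x_{0}$ already equals $c^{2}$.

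The finishing step is a direct computation of $\|M^{*}Mx_{0}-c^{2}x_{0}\|^{2}$. Expanding,
\begin{align*}
\|M^{*}Mx_{0}-c^{2}x_{0}\|^{2}&=\|M^{*}Mx_{0}\|^{2}-2c^{2}\re\langle M^{*}Mx_{0},x_{0}\rangle+c^{4}\|x_{0}\|^{2}\\
&\leq c^{4}-2c^{2}\cdot c^{2}+c^{4}=0,
\end{align*}
so $M^{*}Mx_{0}=c^{2}x_{0}=\|M\|^{2}x_{0}$, as claimed. (If $M=0$ the statement is trivial, so one may assume $c>0$, though the computation above does not actually require it.)

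There is essentially no obstacle here; the only thing to be careful about is invoking $\|M^{*}M\|=\|M\|^{2}$, which is the one nontrivial input and is exactly the Hilbert-space $C^{*}$-property. Everything else is the standard ``equality in Cauchy--Schwarz forces proportionality'' argument, carried out explicitly via the squared-norm expansion so that no appeal to the abstract equality case is needed.
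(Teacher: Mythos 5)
Your proof is correct and is essentially the paper's own argument, just with the intermediate steps spelled out: expanding $\|M^{*}Mx_{0}-\|M\|^{2}x_{0}\|^{2}$ and using $\langle M^{*}Mx_{0},x_{0}\rangle=\|Mx_{0}\|^{2}=\|M\|^{2}$ gives exactly the paper's identity $\|M^{*}Mx_{0}\|^{2}-\|M\|^{4}$, and the final $\leq 0$ uses $\|M^{*}M\|=\|M\|^{2}$ just as you do.
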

	\begin{proof}
		One readily verifies that
		\begin{align*}
			\|M^{*}Mx_{0}-\|M\|^{2}x_{0}\|^{2}=\|M^{*}Mx_{0}\|^{2}-\|M\|^{4}\leq0
		\end{align*}
		and this completes the proof.
	\end{proof}
	The next proposition can be found in \cite[Theorem 4.5]{bickel2020crouzeix} for functional calculi $\gamma$. Using elementary operator theory from the book \cite{paulsen2002completely}, we present a new proof that also works for general $\gamma$.
		\begin{proposition}\label{existence extremal measure}
		Let $A$ be a unital uniform algebra acting on a compact Hausdorff space $K$, $H$ a Hilbert space and $\gamma\colon A\to \mathscr{L}(H)$ a unital bounded homomorphism. If $(f_{0},x_{0})$ is extremal for $\gamma$, then there is a Radon measure $\mu$ on $K$ such that
		\begin{align}\label{extremal measure}
			\langle\gamma(f)x_{0},x_{0}\rangle=\int_{K}f(z) \ \mathrm{d}_{\mu}z
		\end{align}
		for all $f\in A$. Moreover, $\mu$ is a probability measure.
	\end{proposition}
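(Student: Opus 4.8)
The plan is to realise the linear functional $\psi\colon A\to\mathbb{C}$, $\psi(f):=\langle\gamma(f)x_{0},x_{0}\rangle$, as integration against a probability measure, by first showing that $\psi$ is a \emph{unital contraction} on $A$ and then invoking the classical correspondence between unital contractive functionals and representing measures. Set $c:=\|\gamma\|$; since $\gamma(1)=1$ we have $c\geq1>0$. The first point I would establish is that extremality forces $\gamma(f_{0})$ to attain its operator norm at $x_{0}$: indeed $\|\gamma(f_{0})\|\leq c\|f_{0}\|_{K}=c$ while $\|\gamma(f_{0})x_{0}\|=c$, so $\|\gamma(f_{0})\|=c$ and $x_{0}$ is a unit vector realising this norm. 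Lemma \ref{C*-like identity} applied to $M=\gamma(f_{0})$ then delivers the identity $\gamma(f_{0})^{*}\gamma(f_{0})x_{0}=c^{2}x_{0}$.

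The key step — and the only non-routine point — is to exploit this identity to improve the trivial estimate $|\langle\gamma(f)x_{0},x_{0}\rangle|\leq c\|f\|_{K}$ down to $\|f\|_{K}$. Substituting $x_{0}=c^{-2}\gamma(f_{0})^{*}\gamma(f_{0})x_{0}$ into the second slot of the inner product and using multiplicativity $\gamma(f_{0})\gamma(f)=\gamma(f_{0}f)$, one obtains
\begin{align*}
|\langle\gamma(f)x_{0},x_{0}\rangle| &=\frac{1}{c^{2}}\bigl|\langle\gamma(f_{0}f)x_{0},\gamma(f_{0})x_{0}\rangle\bigr|\\
&\leq\frac{1}{c^{2}}\,\|\gamma(f_{0}f)x_{0}\|\,\|\gamma(f_{0})x_{0}\|\leq\frac{1}{c}\,\|\gamma\|\,\|f_{0}f\|_{K}\leq\|f\|_{K},
\end{align*}
using Cauchy--Schwarz, $\|\gamma(f_{0})x_{0}\|=c$, submultiplicativity of $\|\cdot\|_{K}$, and $\|f_{0}\|_{K}=1$. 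Since also $\psi(1)=\|x_{0}\|^{2}=1$, this shows $\|\psi\|=1=\psi(1)$.

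Finally I would extend $\psi$ by Hahn--Banach to a functional $\tilde{\psi}\in C(K)^{*}$ with $\|\tilde{\psi}\|=1$, so that $\tilde{\psi}(1)=1=\|\tilde{\psi}\|$. As a unital contractive functional on the (operator) system $C(K)$, $\tilde{\psi}$ is automatically positive; this is the piece of elementary operator theory one can read off from \cite{paulsen2002completely}, but it is also immediate, since for real $g\in C(K)$ with range in $[m,M]$ one has $\|g-\tfrac{M+m}{2}\|_{K}=\tfrac{M-m}{2}$, forcing $\tilde{\psi}(g)\in[m,M]\subset\mathbb{R}$. The Riesz representation theorem then furnishes a positive Radon measure $\mu$ on $K$ with $\tilde{\psi}(g)=\int_{K}g\,\mathrm{d}\mu$ for all $g\in C(K)$; restricting to $A$ yields \eqref{extremal measure}, and $\mu(K)=\tilde{\psi}(1)=1$ makes $\mu$ a probability measure. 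I expect the only genuine obstacle to be spotting the rewriting in the middle display; once $\psi$ is known to be a contraction, the passage to a probability measure is entirely standard.
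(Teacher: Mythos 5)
Your proof follows essentially the same route as the paper's: you establish that $\omega(f):=\langle\gamma(f)x_0,x_0\rangle$ is a unital contraction via the identity $\gamma(f_0)^*\gamma(f_0)x_0=\|\gamma\|^2 x_0$ (the paper's Lemma~\ref{C*-like identity}) and the multiplicativity $\gamma(f_0)\gamma(f)=\gamma(f_0 f)$, and then pass through a Hahn--Banach extension and Riesz representation. The central estimate is identical to the one in the paper, merely written with $\|f\|_K$ carried explicitly instead of normalized to $\leq1$.

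One small caveat about the aside: the claim that positivity of $\tilde\psi$ is ``immediate'' from $\|g-\tfrac{M+m}{2}\|_K=\tfrac{M-m}{2}$ does not quite work as stated, because the inequality $\lvert\tilde\psi(g)-\tfrac{M+m}{2}\rvert\leq\tfrac{M-m}{2}$ only confines $\tilde\psi(g)$ to a \emph{disk} in $\mathbb{C}$ centered at $\tfrac{M+m}{2}$, not to the real segment $[m,M]$. You first need to know $\tilde\psi(g)\in\mathbb{R}$ for real $g$, which requires an extra step, e.g.\ testing against $g+it\cdot 1$ and using $\|g+it\|_K^2=\|g\|_K^2+t^2$ to force the imaginary part of $\tilde\psi(g)$ to vanish. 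That said, your primary justification — citing the standard fact from Paulsen that a unital contractive functional on an operator system is positive — is exactly what the paper does (via \cite[Proposition 2.12, Exercise 2.10]{paulsen2002completely}), so the overall argument stands; only the supplementary elementary argument is incomplete.
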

	\begin{proof}
		Consider the functional $\omega\colon A\to\mathbb{C}$ given by $\omega(f):=\langle\gamma(f)x_{0},x_{0}\rangle$ for $f\in A$. Note that $\omega(1)=1$. We claim that $\omega$ is contractive. Take $f\in A$ with $\|f\|_{K}\leq1$. Since $\|f_{0}f\|_{K}\leq1$ and $(f_{0},x_{0})$ is extremal for $\gamma$, it follows from Lemma \ref{C*-like identity} that
		\begin{align*}
			\|\gamma(f_{0})\|^{2}|\omega(f)|&=|\langle \gamma(f)x_{0},\|\gamma(f_{0})\|^{2}x_{0}\rangle|\\
			&=|\langle\gamma(f)x_{0},\gamma(f_{0})^{*}\gamma(f_{0})x_{0}\rangle|\\
			&=|\langle\gamma(f_{0}f)x_{0},\gamma(f_{0})x_{0}\rangle|\\
			&\leq\|\gamma(f_{0})\|^{2}
		\end{align*}
		and therefore $|\omega(f)|\leq1$ as desired. By \cite[Proposition 2.12]{paulsen2002completely} the functional $\hat{\omega}\colon A+A^{*}\to\mathbb{C}$ given by $\hat{\omega}(f+g^{*}):=\omega(f)+\omega(g)^{*}$ for all $f,g\in A$ is well-defined and is the unique positive extension of $\omega$ to the operator subsystem $A+A^{*}$ of the C*-algebra $C(K)$. By a Hahn--Banach argument, see also \cite[Exercise 2.10]{paulsen2002completely}, we can extend 
		$\hat{\omega}$ to a positive functional on $C(K)$, which we also denote by $\hat{\omega}$. So there exists a unique Radon measure $\mu$ on $K$ such that
		\begin{align*}
			\hat{\omega}(h)=\int_{K}h(z) \ \mathrm{d}_{\mu}z
		\end{align*}
		for all $h\in C(K)$. In particular, 
		\begin{align*}
			\mu(K)=\int_{K}1 \ \mathrm{d}_{\mu}z=\hat{\omega}(1)=\omega(1)=1,
		\end{align*}
		which shows that $\mu$ is a probability measure.
	\end{proof}
	A Radon measure $\mu$ on $K$ satisfying (\ref{extremal measure}) is called an \textit{extremal measure} for the extremal pair $(f_{0},x_{0})$. Any such $\mu$ is linked to $f_{0}$ by the relation
	\begin{align*}
		(\|\gamma\|^{2}-1)\int_{K}f_{0}(z) \ \mathrm{d}_{\mu}z=0,
	\end{align*}
	which follows directly from Proposition \ref{orthogonal}. In particular, if $\|\gamma\|>1$, then $f_{0}$ and $1$ are mutually orthogonal in the Hilbert space $L^{2}(K,\mu)$.
	\begin{remark}
		The proof of Proposition \ref{existence extremal measure} tells us that an extremal measure $\mu$ for $(f_{0},x_{0})$ is unique if $A$ is \textit{Dirichlet}, that is if $A+A^{*}$ is dense in $C(K)$. The authors of \cite{gamelin1971pointwise} determined exactly the topological conditions on $\Omega$ that ensure that $A(\Omega)$ is Dirichlet.
	\end{remark}
	For any bounded antilinear map $\Phi\colon A\to A$ we define a bounded linear map $\gamma_{\Phi}\colon A\to \mathscr{L}(H)$ by the formula
	\begin{align*}
		\gamma_{\Phi}(f):=\gamma(f)+\gamma(\Phi(f))^{*}
	\end{align*}
	for all $f\in A$. Moreover, we write $A'$ for the dual space of $A$. We canonically embed $A'$ in the Banach space of bounded linear maps from $A$ to $\mathscr{L}(H)$.
	
	We are now ready to discuss the main theorem.

	\begin{theorem}\label{main}
		Let $A$ be a unital uniform algebra acting on a compact Hausdorff space $K$, $H$ a Hilbert space, $\gamma\colon A\to \mathscr{L}(H)$ a unital bounded homomorphism and $\Phi\colon A\to A$ a bounded antilinear map. 
        If $\|\gamma\|>1$ and $(f_{0},x_{0})$ is extremal for $\gamma$, then
		\begin{align*}
			\|\gamma\|\leq\bigg(\frac{1}{2}\inf_{\omega\in A'}\|\gamma_{\Phi}-\omega\|\bigg)+\sqrt{\bigg(\frac{1}{2}\inf_{\omega\in A'}\|\gamma_{\Phi}-\omega\|\bigg){}^{2}+|\langle\gamma(\Phi(f_{0})f_{0})x_{0},x_{0}\rangle|}.
		\end{align*}
	\end{theorem}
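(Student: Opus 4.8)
The plan is to test the quantity $\inf_{\omega\in A'}\|\gamma_{\Phi}-\omega\|$ against the extremal data, paired in a way that simultaneously exploits the orthogonality coming from Proposition~\ref{orthogonal} and the multiplicativity of $\gamma$. Write $y_{0}:=\gamma(f_{0})x_{0}$; extremality of $(f_{0},x_{0})$ gives $\|y_{0}\|=\|\gamma\|$, and since $\|\gamma\|>1$, Proposition~\ref{orthogonal} yields $\langle y_{0},x_{0}\rangle=\langle\gamma(f_{0})x_{0},x_{0}\rangle=0$. I would then fix an arbitrary $\omega\in A'$, regarded via the canonical embedding as the map $f\mapsto\omega(f)\,\mathrm{id}_{H}$, and study the scalar $\langle(\gamma_{\Phi}-\omega)(f_{0})x_{0},\,y_{0}\rangle$.

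First I would compute this scalar exactly. Its $\omega$--contribution is $\langle\omega(f_{0})x_{0},y_{0}\rangle=\omega(f_{0})\langle x_{0},y_{0}\rangle=0$ by the orthogonality just recorded, so the scalar equals $\langle\gamma_{\Phi}(f_{0})x_{0},y_{0}\rangle$, independently of $\omega$. Expanding $\gamma_{\Phi}(f_{0})=\gamma(f_{0})+\gamma(\Phi(f_{0}))^{*}$, the first summand contributes $\langle\gamma(f_{0})x_{0},y_{0}\rangle=\langle y_{0},y_{0}\rangle=\|\gamma\|^{2}$; for the second summand, move the adjoint across and use that $\gamma$ is a homomorphism:
\[
\langle\gamma(\Phi(f_{0}))^{*}x_{0},\,y_{0}\rangle=\langle x_{0},\,\gamma(\Phi(f_{0}))\gamma(f_{0})x_{0}\rangle=\langle x_{0},\,\gamma(\Phi(f_{0})f_{0})x_{0}\rangle=\overline{\langle\gamma(\Phi(f_{0})f_{0})x_{0},x_{0}\rangle}.
\]
Hence $\langle(\gamma_{\Phi}-\omega)(f_{0})x_{0},y_{0}\rangle=\|\gamma\|^{2}+\overline{\langle\gamma(\Phi(f_{0})f_{0})x_{0},x_{0}\rangle}$ for every $\omega\in A'$.

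Next I would bound the same scalar from above. By Cauchy--Schwarz and the operator-norm estimate $\|(\gamma_{\Phi}-\omega)(f_{0})\|_{\mathscr{L}(H)}\le\|\gamma_{\Phi}-\omega\|\,\|f_{0}\|_{K}=\|\gamma_{\Phi}-\omega\|$, together with $\|x_{0}\|=1$ and $\|y_{0}\|=\|\gamma\|$, one gets $|\langle(\gamma_{\Phi}-\omega)(f_{0})x_{0},y_{0}\rangle|\le\|\gamma_{\Phi}-\omega\|\,\|\gamma\|$. Combining this with the reverse triangle inequality applied to $\|\gamma\|^{2}+\overline{\langle\gamma(\Phi(f_{0})f_{0})x_{0},x_{0}\rangle}$ gives
\[
\|\gamma\|^{2}-|\langle\gamma(\Phi(f_{0})f_{0})x_{0},x_{0}\rangle|\le\|\gamma_{\Phi}-\omega\|\,\|\gamma\|.
\]
Taking the infimum over $\omega\in A'$ and abbreviating $c:=\inf_{\omega\in A'}\|\gamma_{\Phi}-\omega\|\ge0$ and $b:=|\langle\gamma(\Phi(f_{0})f_{0})x_{0},x_{0}\rangle|\ge0$ yields the quadratic inequality $\|\gamma\|^{2}-c\,\|\gamma\|-b\le0$ in the positive number $\|\gamma\|$. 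Since the corresponding parabola opens upward, $\|\gamma\|$ must lie below its larger root, i.e.\ $\|\gamma\|\le\tfrac{c}{2}+\sqrt{\tfrac{c^{2}}{4}+b}$, which is exactly the asserted bound.

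I do not expect a serious obstacle here. The only non-obvious decision is to pair the test function $f_{0}$ with $y_{0}=\gamma(f_{0})x_{0}$ rather than with $x_{0}$: it is precisely this pairing that makes the $\omega$--term vanish (through $\langle y_{0},x_{0}\rangle=0$) while converting $\gamma(\Phi(f_{0}))\gamma(f_{0})$ into $\gamma(\Phi(f_{0})f_{0})$ (through multiplicativity). Everything else---the expansion, the Cauchy--Schwarz estimate, and solving the quadratic---is routine.
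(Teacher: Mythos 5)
Your proof is correct and takes essentially the same route as the paper: both use Proposition~\ref{orthogonal} to kill the $\omega$-term, use multiplicativity to rewrite $\gamma(\Phi(f_0))\gamma(f_0)=\gamma(\Phi(f_0)f_0)$, and solve the resulting quadratic in $\|\gamma\|$. The paper starts from $\|\gamma\|^{2}=|\langle\gamma(f_{0})x_{0},\gamma(f_{0})x_{0}\rangle|$ and substitutes $\gamma(f_0)=\gamma_\Phi(f_0)-\gamma(\Phi(f_0))^*$, whereas you evaluate the test pairing $\langle(\gamma_\Phi-\omega)(f_0)x_0,y_0\rangle$ exactly and then invoke the reverse triangle inequality---an algebraic rearrangement of the same estimate.
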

	\begin{proof} It follows from Proposition \ref{orthogonal} that $\langle\gamma(f_{0})x_{0},x_{0}\rangle=0$ and therefore
		\begin{align*}
			\|\gamma\|^{2}&=\|\gamma(f_{0})x_{0}\|^{2}\\
			&=|\langle\gamma(f_{0})x_{0},\gamma(f_{0})x_{0}\rangle|\\
			&\leq|\langle\gamma(f_{0})x_{0},(\gamma_{\Phi}(f_{0})-\omega(f_{0}))x_{0}\rangle|+|\langle\gamma(\Phi(f_{0})f_{0})x_{0},x_{0}\rangle|\\
   &\qquad+|\omega(f_{0})||\langle\gamma(f_{0})x_{0},x_{0}\rangle|\\
			&\leq\|\gamma\|\|\gamma_{\Phi}-\omega\|+|\langle\gamma(\Phi(f_{0})f_{0})x_{0},x_{0}\rangle|
		\end{align*}
		for all $\omega\in A'$. This proves the desired inequality.
	\end{proof}
	\begin{remark}
		The infimum appearing in Theorem \ref{main} is by definition the shortest distance from $\gamma_{\Phi}$ to the subspace $A'$ in the Banach space of bounded linear maps from $A$ to $\mathscr{L}(H)$. Moreover, for each $\omega\in A'$ the map $\gamma_{\Phi}-\omega$ is a rank-one perturbation of $\gamma_{\Phi}$.
	\end{remark}
 The following example is taken from \cite{ransford2018remarks}, where it was used to prove that $\kappa:=1+\sqrt{2}$ is a sharp absolute upper bound for the operator norms of functional calculi $\gamma\colon A(\Omega)\to\mathscr{L}(H)$ for which there exists a contractive antilinear map $\Phi\colon A(\Omega)\to A(\Omega)$ with $\|\gamma_{\Phi}\|\leq2$. 
 
 In addition to the extremal function $f_0$ given in \cite{ransford2018remarks}, we provide an extremal vector $x_0$ and an expression for the associated extremal measure $\mu$. The example also shows that the estimate in Theorem \ref{main} cannot be improved.
\begin{example}\label{RS example}
	For $j=0,1$ let $\Omega_{j}$ be the open disk in $\mathbb{C}$ with center $j$ and radius $1/4$. Consider the set $\Omega:=\Omega_{0}\cup\Omega_{1}$ and the matrix
	\begin{align*}
		M=\begin{pmatrix}
			1&1\\
			0&0
		\end{pmatrix}.
	\end{align*}
	If $\gamma\colon A(\Omega)\to\mathscr{L}(\mathbb{C}^{2})$ is the functional calculus of $M$ on $\Omega$, then one readily verifies that
	\begin{align*}
		\gamma(f)=\begin{pmatrix}
			f(1)&f(1)-f(0)\\
			0&f(0)
		\end{pmatrix}
	\end{align*}
for all $f\in A(\Omega)$. It is not difficult to see that the pair $(f_{0},x_{0})$ defined by
\begin{align*}
f_{0}(z):=\begin{cases}
	-1&z\in\Omega_{0}^{-}\\
	+1&z\in\Omega_{1}^{-}
\end{cases},\qquad x_{0}:=\begin{pmatrix}
	(2-\sqrt{2})^{1/2}/2\\
	(2+\sqrt{2})^{1/2}/2
\end{pmatrix}
\end{align*}
is extremal for $\gamma$ and that 
\begin{align*}
\|\gamma\|=\|\gamma(f_{0})x_{0}\|=1+\sqrt{2}.
\end{align*}
If $\mu$ is an extremal measure for $(f_{0},x_{0})$, then a direct computation yields
\begin{align}\label{extremal measure example}
	\int_{\Omega^{\partial}}f(z) \ \mathrm{d}_{\mu}z=\langle\gamma(f)x_{0},x_{0}\rangle=\frac{f(0)+f(1)}{2}
\end{align}
for all $f\in A(\Omega)$. The antilinear map $\Phi\colon A(\Omega)\to A(\Omega)$ given by
\begin{align*}
	\Phi(f)(z):=\begin{cases}
		-f(0)^{*}&z\in\Omega_{0}^{-}\\
		-f(1)^{*}&z\in\Omega_{1}^{-}
	\end{cases}
\end{align*}
for $f\in A(\Omega)$ is contractive. Moreover, it satisfies $\|\gamma_{\Phi}(f)\|=|f(1)-f(0)|$ for all $f\in A(\Omega)$ and therefore $\|\gamma_{\Phi}\|=2$. Because $\Phi(f_{0})f_{0}=-1$, we furthermore have $\langle\gamma(\Phi(f_{0})f_{0})x_{0},x_{0}\rangle=-1$ by equation (\ref{extremal measure example}). So Theorem \ref{main} gives
\begin{align*}
	\|\gamma\|\leq\bigg(\frac{1}{2}\inf_{\omega\in A(\Omega)'}\|\gamma_{\Phi}-\omega\|\bigg)+\sqrt{\bigg(\frac{1}{2}\inf_{\omega\in A(\Omega)'}\|\gamma_{\Phi}-\omega\|\bigg){}^{2}+1}\leq1+\sqrt{2},
\end{align*}
which implies that the infimum is attained at $\omega=0$ and is equal to $\|\gamma_{\Phi}\|=2$. 
\end{example}
	\section{Applications to Functional Calculi}
	
	Suppose that $\Omega$ is a bounded open subset of $\mathbb{C}$ with smooth boundary. The boundary $\Omega^{\partial}$ is diffeomorphic to a finite disjoint union of circles, see e.g.\ \cite[Appendix]{milnor1997topology}. Fix a Hilbert space $H$ and a bounded linear operator $M$ thereon with spectrum inside of $\Omega$. Recall that the \textit{functional calculus} $\gamma\colon A(\Omega)\to\mathscr{L}(H)$ of $M$ on $\Omega$ is defined by
	\begin{align*}
		\gamma(f):=\frac{1}{2\pi i}\int_{\Omega^{\partial}}f(\sigma)(\sigma-M)^{-1} \ \mathrm{d}\sigma
	\end{align*} 
	for $f\in A(\Omega)$. A positive number $\kappa$ is called a \textit{spectral constant} for $M$ on $\Omega$ if $\|\gamma\|\leq\kappa$.
	
	Let $v\colon\Omega^{\partial}\to\mathbb{C}$ be the map that sends any boundary point in $\Omega^{\partial}$ to the outward unit normal vector of $\Omega$ at that point. We define the \textit{double-layer potential} $P\colon\Omega^{\partial}\to\mathscr{L}(H)$ of $M$ on $\Omega$ by
	\begin{align*}
		P(\sigma):=\bigg(\frac{v(\sigma)}{2\pi}(\sigma-M)^{-1}\bigg)+\bigg(\frac{v(\sigma)}{2\pi}(\sigma-M)^{-1}\bigg){}^{*}
	\end{align*}
	for $\sigma\in\Omega^{\partial}$. Observe that the values of the double-layer potential are Hermitian.
	
	Let $c$ be a connected component of $\Omega^{\partial}$. Write $\ell_{c}$ for the total arc length of $c$ and suppose that $\sigma_{c}\colon\interval{0}{\ell_{c}}\to\mathbb{C}$ is an arc length parametrization of $c$ with the orientation induced by the positive orientation on $\Omega$. So by choice of orientation we have 
	\begin{align*}
		v(\sigma_{c}(s))=\frac{\sigma_{c}'(s)}{i}.
	\end{align*}
	for all $s\in\interval{0}{\ell_{c}}$.
	
	For any given $f\in A(\Omega)$ the function $\Phi(f)\colon\Omega\to\mathbb{C}$ defined by
	\begin{align*}
		\Phi(f)(z):=\frac{1}{2\pi i}\int_{\Omega^{\partial}}\frac{f(\sigma)^{*}}{\sigma-z} \ \mathrm{d}\sigma
	\end{align*}
	for $z\in\Omega$ has a unique continuous extension $\Phi(f)\colon\Omega^{-}\to\mathbb{C}$. In fact, $\Phi(f)\in A(\Omega)$. The induced map $\Phi\colon A(\Omega)\to A(\Omega)$ is called the \textit{conjugate Cauchy transform} on $\Omega$. Note that $\Phi$ is a bounded antilinear map. In fact, if $\Omega$ is convex, then $\Phi$ is even contractive. For a proof of these properties of $\Phi$ we refer to \cite[Lemma 2.1]{crouzeix2017numerical}. 
 
 In the special situation where $\Omega$ is a disk, the range of $\Phi$ is easily seen to consist of the constant functions, see e.g.\ \cite[Page 205]{remmert1991theory}.
	
	It follows from Cauchy's integral formula that
	\begin{align*}
		\gamma_{\Phi}(f)=\sum_{\substack{c\subset\Omega^{\partial}\\ \mathrm{connected}\\ \mathrm{component}}}\int_{0}^{\ell_{c}}f(\sigma_{c}(s))P(\sigma_{c}(s)) \ \mathrm{d}s
	\end{align*}
	for all $f\in A(\Omega)$.
 Already in Delyon-Delyon's paper \cite{delyon1999generalization} as well as in Putinar-Sandberg's \cite{putinar2005skew} and Crouzeix's paper \cite{crouzeix2007numerical}, the fact that the values of $P$ are non-negative was used. Since 
 \begin{align*}
\sum_{\substack{c\subset\Omega^{\partial}\\ \mathrm{connected}\\ \mathrm{component}}}\int_{0}^{\ell_{c}}P(\sigma_{c}(s)) \ \mathrm{d}s=2,
 \end{align*}
 this readily implies that $\|\gamma_\Phi\|\leq 2$ and thus
	\begin{align}\label{infBoundW}
		\inf_{\omega\in A(\Omega)'}\|\gamma_{\Phi}-\omega\|\leq2,
	\end{align}
	which we shall use to reprove Crouzeix--Palencia's result (Theorem \ref{CP}) result and Okubo--Ando's result (Theorem \ref{OA}) as an application of Theorem \ref{main}.

    
It turns out that von Neumann's inequality (see Theorem \ref{vN} below) also follows from Theorem \ref{main}. For this we need the following proposition due to Caldwell--Greenbaum--Li \cite[Lemma 2.1]{caldwell2018some}. For the reader's convenience, we sketch the proof. 
For any Hermitian $R$ we write $\lambda_{\minn}(R)$ for the smallest element of the (real) spectrum of $R$. 
\begin{proposition}\label{infBound}
	Let $\Omega\subset\mathbb{C}$ be a smoothly bounded open subset with conjugate Cauchy transform $\Phi\colon A(\Omega)\to A(\Omega)$. If $M$ is a bounded linear operator on a Hilbert space $H$ with spectrum inside of $\Omega$, then
	there exists an $\omega\in A(\Omega)'$ such that the functional calculus $\gamma\colon A(\Omega)\to\mathscr{L}(H)$ of $M$ on $\Omega$ satisfies 
	\begin{align*}
		\|\gamma_{\Phi}-\omega\|\leq2-\sum_{\substack{c\subset\Omega^{\partial}\\ \mathrm{connected}\\ \mathrm{component}}}\int_{0}^{\ell_{c}}\lambda_{\minn}(P(\sigma_{c}(s))) \ \mathrm{d}s.
	\end{align*}
\end{proposition}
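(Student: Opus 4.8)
The plan is to construct the rank-one functional $\omega$ explicitly from the ``positive part'' of the double-layer potential and to estimate the operator norm $\|\gamma_{\Phi}-\omega\|$ pointwise on the boundary. Recall from the displayed identity preceding the proposition that
\[
\gamma_{\Phi}(f)=\sum_{\substack{c\subset\Omega^{\partial}\\ \mathrm{connected}\\ \mathrm{component}}}\int_{0}^{\ell_{c}}f(\sigma_{c}(s))P(\sigma_{c}(s)) \ \mathrm{d}s,
\]
where each $P(\sigma_c(s))$ is Hermitian. The key observation is that $P(\sigma_c(s))-\lambda_{\minn}(P(\sigma_c(s)))\,I$ is a \emph{non-negative} operator, while the total ``mass'' $\sum_c\int_0^{\ell_c}P(\sigma_c(s))\,\mathrm{d}s$ equals $2$. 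First I would define the candidate correction functional
\[
\omega(f):=\sum_{\substack{c\subset\Omega^{\partial}\\ \mathrm{connected}\\ \mathrm{component}}}\int_{0}^{\ell_{c}}f(\sigma_{c}(s))\,\lambda_{\minn}(P(\sigma_{c}(s)))\,\mathrm{d}s\cdot I,
\]
which is a scalar multiple of the identity and hence lies in $A(\Omega)'$ under the canonical embedding described in the excerpt; one should first check that $s\mapsto\lambda_{\minn}(P(\sigma_c(s)))$ is integrable (it is continuous, since the resolvent $(\sigma-M)^{-1}$ and hence $P$ depends continuously on $\sigma$, and $\lambda_{\minn}$ is continuous on Hermitian operators).

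Next I would estimate the norm. For $f\in A(\Omega)$ with $\|f\|_{\Omega^\partial}\le 1$ we have
\[
\gamma_{\Phi}(f)-\omega(f)=\sum_{c}\int_{0}^{\ell_{c}}f(\sigma_{c}(s))\bigl(P(\sigma_{c}(s))-\lambda_{\minn}(P(\sigma_{c}(s)))I\bigr)\mathrm{d}s.
\]
Since each operator $Q(s):=P(\sigma_c(s))-\lambda_{\minn}(P(\sigma_c(s)))I$ is non-negative and $|f(\sigma_c(s))|\le 1$, a standard argument (writing $Q(s)=Q(s)^{1/2}Q(s)^{1/2}$ and applying Cauchy--Schwarz, or invoking that for non-negative $Q(s)$ the operator-valued integral against a scalar function of modulus $\le 1$ is bounded in norm by $\bigl\|\sum_c\int Q(s)\,\mathrm{d}s\bigr\|$) gives
\[
\|\gamma_{\Phi}(f)-\omega(f)\|\le\Bigl\|\sum_{c}\int_{0}^{\ell_{c}}Q(s)\,\mathrm{d}s\Bigr\|=\Bigl\|2I-\sum_{c}\int_{0}^{\ell_{c}}\lambda_{\minn}(P(\sigma_{c}(s)))\,\mathrm{d}s\cdot I\Bigr\|,
\]
using $\sum_c\int_0^{\ell_c}P(\sigma_c(s))\,\mathrm{d}s=2I$. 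Since $\lambda_{\minn}(P(\sigma_c(s)))\le\|P(\sigma_c(s))\|$ and the total integral of $P$ is $2I$, the scalar $\sum_c\int_0^{\ell_c}\lambda_{\minn}(P(\sigma_c(s)))\,\mathrm{d}s$ is real and at most $2$, so the right-hand side equals $2-\sum_c\int_0^{\ell_c}\lambda_{\minn}(P(\sigma_c(s)))\,\mathrm{d}s$. Taking the supremum over such $f$ yields the claimed bound on $\|\gamma_{\Phi}-\omega\|$.

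The main obstacle I anticipate is justifying the pointwise-to-global norm estimate for the operator-valued integral when the $Q(s)$ do not commute: one must argue carefully that $\bigl\|\int g(s)Q(s)\,\mathrm{d}s\bigr\|\le\bigl\|\int Q(s)\,\mathrm{d}s\bigr\|$ for non-negative $Q(s)$ and $|g|\le 1$. This follows by testing against unit vectors $\xi,\eta$: $|\langle\int g(s)Q(s)\,\mathrm{d}s\,\xi,\eta\rangle|\le\int|g(s)|\,|\langle Q(s)\xi,\eta\rangle|\le\int\langle Q(s)\xi,\xi\rangle^{1/2}\langle Q(s)\eta,\eta\rangle^{1/2}\mathrm{d}s\le\bigl(\int\langle Q(s)\xi,\xi\rangle\mathrm{d}s\bigr)^{1/2}\bigl(\int\langle Q(s)\eta,\eta\rangle\mathrm{d}s\bigr)^{1/2}\le\bigl\|\int Q(s)\,\mathrm{d}s\bigr\|$, where the middle step uses the Cauchy--Schwarz inequality for the non-negative form $\langle Q(s)\cdot,\cdot\rangle$ and then for the integral. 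This is exactly the computation carried out in \cite{caldwell2018some}, and the remaining bookkeeping with connected components is routine.
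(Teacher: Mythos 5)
Your proposal is correct and follows essentially the same route as the paper, which only sketches the argument and defers to Caldwell--Greenbaum--Li: you choose the same functional $\omega$ (the one displayed in the paper as equation (\ref{CG-functional}), read as a scalar multiple of the identity via the canonical embedding of $A(\Omega)'$), and you use the same two ingredients --- non-negativity of $P(z)-\lambda_{\minn}(P(z))$ on the boundary and $\sum_c\int_0^{\ell_c}P\,\mathrm{d}s=2$ --- together with the standard double Cauchy--Schwarz estimate for a non-negative operator-valued kernel integrated against a bounded scalar. One small slip: your stated justification that $\sum_c\int_0^{\ell_c}\lambda_{\minn}(P(\sigma_c(s)))\,\mathrm{d}s\le 2$ (``since $\lambda_{\minn}\le\|P\|$ and the total integral of $P$ is $2I$'') does not actually follow, because $\int\|P\|\,\mathrm{d}s$ can exceed $2$; the correct and equally short argument is that $\lambda_{\minn}(P(\sigma_c(s)))\le\langle P(\sigma_c(s))\xi,\xi\rangle$ for any unit vector $\xi$, so integrating gives $\sum_c\int_0^{\ell_c}\lambda_{\minn}(P)\,\mathrm{d}s\le\langle 2I\,\xi,\xi\rangle=2$ (equivalently, $\sum_c\int Q\,\mathrm{d}s\ge 0$ forces the scalar $2-\sum_c\int\lambda_{\minn}(P)\,\mathrm{d}s$ to be non-negative). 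With that one line repaired, the proof is complete and matches the intended argument.
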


	\begin{proof}
		We only give a sketch of the proof. For the details we refer to the original work \cite[Lemma 2.1]{caldwell2018some}. One readily verifies that the functional $\omega\colon A(\Omega)\to\mathbb{C}$ given by
		\begin{align}\label{CG-functional}
			\omega(f):=\sum_{\substack{c\subset\Omega^{\partial}\\ \mathrm{connected}\\ \mathrm{component}}}\int_{0}^{\ell_{c}}f(\sigma_{c}(s))\lambda_{\minn}(P(\sigma_{c}(s))) \ \mathrm{d}s
		\end{align}
		for $f\in A(\Omega)$ is bounded. Using that $P(z)-\lambda_{\minn}(P(z))\geq0$ for all $z\in\Omega^{\partial}$, one can prove that $\omega\in A(\Omega)'$ satisfies the desired estimate.
	\end{proof}
    Note that Theorem \ref{main} relies on the existence of extremal pairs. For finite-dimensional Hilbert spaces it is well-known that functional calculi admit extremal pairs, see \cite[Theorem 2.1]{crouzeix2004bounds}. Fortunately, for general Hilbert spaces one can often reduce to finite-dimensional ones by the following proposition, which is an extension of \cite[Theorem 2]{crouzeix2007numerical}. A \textit{finite-dimensional compression} of $M$ is a linear operator of the form $M^{\downarrow}:=\Pi M\Pi^{*}$, where $\Pi\colon H\to V$ is an orthogonal projection onto some finite-dimensional subspace $V$ of $H$.
\begin{proposition}\label{compressions}
Suppose that $\Omega$ is a smoothly bounded open subset of $\mathbb{C}$ such that the polynomials are dense in $A(\Omega)$. 
Let $\kappa$ be a positive number and $M$ a bounded linear operator on a Hilbert space $H$ with spectrum inside of $\Omega$. If $\|p(M^{\downarrow})\|\leq\kappa\|p\|_{\Omega^{\partial}}$ for every polynomial $p$ and finite-dimensional compression $M^{\downarrow}$ of $M$, then 
$\kappa$ is a spectral constant for $M$ on $\Omega$.
\end{proposition}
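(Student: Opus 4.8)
The plan is to reduce the assertion to a bound on polynomials and then to realize the action of $p(M)$ on an arbitrary pair of vectors through a sufficiently large finite-dimensional compression. First I would reduce to polynomials. By assumption the functional calculus $\gamma\colon A(\Omega)\to\mathscr{L}(H)$ of $M$ on $\Omega$ is a bounded homomorphism with $\gamma(p)=p(M)$ for polynomials $p$, and the polynomials are dense in $A(\Omega)$. Hence, once we know $\|p(M)\|\le\kappa\|p\|_{\Omega^{\partial}}$ for every polynomial $p$, we may pick for a given $f\in A(\Omega)$ polynomials $p_{n}\to f$ uniformly on $\Omega^{\partial}$, and by continuity of $\gamma$ obtain $\|\gamma(f)\|=\lim_{n}\|p_{n}(M)\|\le\kappa\lim_{n}\|p_{n}\|_{\Omega^{\partial}}=\kappa\|f\|_{\Omega^{\partial}}$, that is $\|\gamma\|\le\kappa$. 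So it suffices to prove the polynomial estimate.

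Next, fix a polynomial $p$ of degree $n$ and unit vectors $x,y\in H$. Set $V:=\operatorname{span}\{x,Mx,\dots,M^{n}x,y\}$, a finite-dimensional subspace of $H$, let $\Pi\colon H\to V$ be the orthogonal projection, and put $M^{\downarrow}:=\Pi M\Pi^{*}$. Since $\Pi^{*}\Pi$ is the orthogonal projection of $H$ onto $V$, it fixes each $M^{j}x$ with $0\le j\le n$; an easy induction on $j$ then yields $(M^{\downarrow})^{j}\Pi x=\Pi M^{j}x$ for all $0\le j\le n$, and hence $p(M^{\downarrow})\Pi x=\Pi\, p(M)x$. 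Because $p(M)x\in V$ as well, the ``off-diagonal'' contribution $\langle p(M)x,(I-\Pi^{*}\Pi)y\rangle$ vanishes, so $\langle p(M)x,y\rangle=\langle\Pi p(M)x,\Pi y\rangle=\langle p(M^{\downarrow})\Pi x,\Pi y\rangle$. Therefore $|\langle p(M)x,y\rangle|\le\|p(M^{\downarrow})\|\,\|\Pi x\|\,\|\Pi y\|\le\|p(M^{\downarrow})\|\le\kappa\|p\|_{\Omega^{\partial}}$ by hypothesis. Taking the supremum over unit vectors $x,y$ gives $\|p(M)\|\le\kappa\|p\|_{\Omega^{\partial}}$, which closes the loop.

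I expect the only delicate point to be the bookkeeping around the compression identity $(M^{\downarrow})^{j}\Pi x=\Pi M^{j}x$: one must choose $V$ large enough to contain the entire finite Krylov chain $x,Mx,\dots,M^{n}x$ (together with $y$), so that the compression reproduces $p(M)$ on $x$ exactly and the error term involving $y$ drops out, while keeping $V$ finite-dimensional. The density of the polynomials in $A(\Omega)$ is used only in the final limiting step, and is precisely what makes it go through; note also that the argument never requires any information about the spectrum of $M^{\downarrow}$, only the polynomial bound assumed in the statement.
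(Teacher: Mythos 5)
Your proof is correct and takes essentially the same route as the paper: compress $M$ to the Krylov subspace spanned by $x, Mx, \dots, M^{n}x$, observe that $p(M^{\downarrow})\Pi x=\Pi p(M)x$ with $p(M)x$ already in $V$, and finish by density of polynomials in $A(\Omega)$. The only (harmless) redundancy is that you adjoin $y$ to $V$ and argue via the bilinear form $\langle p(M)x,y\rangle$; since $p(M)x$ lies in the Krylov subspace anyway, the paper bounds $\|p(M)x\|=\|p(M^{\downarrow})x\|$ directly without introducing $y$.
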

\begin{proof}
	Let $\gamma\colon A(\Omega)\to\mathscr{L}(H)$ denote the functional calculus of $M$ on $\Omega$. Let $p$ be a polynomial of degree $d$. Take $x\in H$ and consider the orthogonal projection $\Pi\colon H\to V$, where $V\subset H$ is the Krylov subspace spanned by the vectors $x,Mx,\ldots,M^{d}x\in H$. Define the finite-dimensional compression $M^{\downarrow}:=\Pi M\Pi^{*}$. By construction we have $\gamma(p)x=p(M^{\downarrow})x$ and therefore
	\begin{align*}
		\|\gamma(p)x\|=\|p(M^{\downarrow})x\|\leq\|p(M^{\downarrow})\|\|x\|\leq\kappa\|p\|_{\Omega^{\partial}}\|x\|.
	\end{align*}
	Since $x\in H$ was chosen arbitrarily, we obtain $\|\gamma(p)\|\leq\kappa\|p\|_{\Omega^{\partial}}$. Since we assumed that the polynomials are dense in $A(\Omega)$, we obtain $\|\gamma\|\leq\kappa$.
\end{proof}

 \begin{remark}
     Due to the use of the holomorphic functional calculus, the results on spectral sets always comes with the obstruction that the set $\Omega$ has to be open, smoothly bounded and contain the spectrum of $M$. In light of von Neumann's inequality and Crouzeix's conjecture this is unnatural in the sense that one would like to allow for spectral points on the boundary of the spectral set. However, if the spectral constants are universal and thus not depending on $\Omega$ such stronger statements follow automatically. More precisely, given a fixed bounded set $W\subset\mathbb{C}$ and a constant $\kappa\geq1$, the statement
     \begin{align*}
            \|p(M)\|\leq\kappa\|p\|_{\Omega^{\partial}},   
     \end{align*}
     for all polynomials $p$, and $\Omega\supset W$ open, convex, smoothly bounded, which contain the spectrum of $M$, implies that $W$ is a $\kappa$-spectral set for $M$.
 \end{remark}
	\subsection{Crouzeix--Palencia's Result}

    We apply our main result to recover that, if $\Omega$ is convex and contains the closure of the numerical range of $M$, then $1+\sqrt{2}$ is a spectral constant for $M$ on $\Omega$. This is was first shown by Crouzeix--Palencia \cite{crouzeix2017numerical} by combining $\|\gamma_{\Phi}\|\leq 2$ with the fact that $\|\Phi\|\leq1$ in a rather technical fashion. As observed in \cite{ransford2018remarks}, this final step can be simplified by a trick involving the formula
    \begin{align*}
    \|\gamma(f)^* \gamma(f) \gamma(f)^* \gamma(f)\|=\|\gamma(f)\|^{4}
    \end{align*}
    together with the multiplicativity of the homomorphism. The following proof is different to that approach, but instead exploits the Hilbert space structure and the homomorphism through the extremal measure.
\begin{theorem}[Crouzeix--Palencia]\label{CP}
	Suppose that $\Omega\subset\mathbb{C}$ is a convex bounded open subset with smooth boundary. If $M$ is a bounded linear operator on a Hilbert space $H$ such that $W(M)^{-}\subset\Omega$, then $1+\sqrt{2}$ is a spectral constant for $M$ on $\Omega$.
\end{theorem}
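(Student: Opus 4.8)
The plan is to reduce to finite dimensions, invoke the existence of an extremal pair, and then read the bound off from Theorem~\ref{main}, using nothing more than $\|\gamma_\Phi\|\le2$, the contractivity of the conjugate Cauchy transform $\Phi$, and the arithmetic identity $1+\sqrt{1^2+1}=1+\sqrt2$.

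First I would reduce to the case $\dim H<\infty$. Since $\Omega^-$ is compact and convex, $\mathbb{C}\setminus\Omega^-$ is connected, so by Mergelyan's theorem the polynomials are dense in $A(\Omega)$, and Proposition~\ref{compressions} becomes available. Thus it suffices to bound $\|p(M^{\downarrow})\|$ by $(1+\sqrt2)\|p\|_{\Omega^{\partial}}$ for every polynomial $p$ and every finite-dimensional compression $M^{\downarrow}=\Pi M\Pi^{*}$ of $M$. Since $\Pi^{*}\colon V\to H$ is an isometry, one has $\langle M^{\downarrow}v,v\rangle=\langle M\Pi^{*}v,\Pi^{*}v\rangle$ with $\|\Pi^{*}v\|=\|v\|$ for all $v\in V$, so $W(M^{\downarrow})\subset W(M)$ and hence $W(M^{\downarrow})^{-}\subset\Omega$; in particular $\sigma(M^{\downarrow})\subset\Omega$. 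Therefore each $M^{\downarrow}$ satisfies the hypotheses of the theorem on a finite-dimensional space, and the claim reduces to proving $\|\gamma\|\le1+\sqrt2$ for the functional calculus $\gamma$ of $M$ on $\Omega$ under the additional assumption that $H$ is finite-dimensional.

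With $H$ finite-dimensional, if $\|\gamma\|\le1$ there is nothing to prove, so assume $\|\gamma\|>1$. By \cite[Theorem~2.1]{crouzeix2004bounds} the functional calculus $\gamma$ then admits an extremal pair $(f_{0},x_{0})$, and by Proposition~\ref{existence extremal measure} there is a probability measure $\mu$ on $\Omega^{\partial}$ with $\langle\gamma(f)x_{0},x_{0}\rangle=\int_{\Omega^{\partial}}f(z)\,\mathrm{d}_{\mu}z$ for all $f\in A(\Omega)$. Since $\Omega$ is convex, $\Phi$ is contractive (see \cite[Lemma~2.1]{crouzeix2017numerical}), so $\|\Phi(f_{0})f_{0}\|_{\Omega^{\partial}}\le\|\Phi(f_{0})\|_{\Omega^{\partial}}\|f_{0}\|_{\Omega^{\partial}}\le1$ and therefore
\begin{align*}
	|\langle\gamma(\Phi(f_{0})f_{0})x_{0},x_{0}\rangle|=\left|\int_{\Omega^{\partial}}\Phi(f_{0})(z)f_{0}(z)\,\mathrm{d}_{\mu}z\right|\le\mu(\Omega^{\partial})=1.
\end{align*}
Combining this with the estimate $\tfrac12\inf_{\omega\in A(\Omega)'}\|\gamma_{\Phi}-\omega\|\le1$ coming from (\ref{infBoundW}), and using that $(a,b)\mapsto a+\sqrt{a^{2}+b}$ is nondecreasing in each of $a\ge0$ and $b\ge0$, Theorem~\ref{main} yields
\begin{align*}
	\|\gamma\|\le1+\sqrt{1^{2}+1}=1+\sqrt2,
\end{align*}
as required.

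The only mildly delicate point --- the \emph{main obstacle}, such as there is one --- is the finite-dimensional reduction: one must justify the density of polynomials in $A(\Omega)$ (which is exactly where convexity enters, via Mergelyan's theorem) so that Proposition~\ref{compressions} may be applied, and check that finite-dimensional compressions inherit the numerical-range containment. Everything after that is a direct assembly of Theorem~\ref{main}, the probability-measure property of Proposition~\ref{existence extremal measure}, and the two cited facts $\|\gamma_{\Phi}\|\le2$ and $\|\Phi\|\le1$; in particular, unlike in \cite{crouzeix2017numerical} and \cite{ransford2018remarks}, no manipulation of $\|\gamma(f)\|^{4}$ via multiplicativity is needed.
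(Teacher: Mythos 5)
Your proof is correct and takes essentially the same route as the paper: reduce to finite dimensions via Proposition~\ref{compressions}, pick an extremal pair and extremal measure, bound $|\langle\gamma(\Phi(f_{0})f_{0})x_{0},x_{0}\rangle|\le\|\Phi\|\le1$ by convexity, and apply Theorem~\ref{main} together with~(\ref{infBoundW}). You merely supply a few details the paper leaves implicit (Mergelyan for polynomial density in $A(\Omega)$, the trivial case $\|\gamma\|\le1$, and the isometry argument showing $W(M^{\downarrow})\subset W(M)$).
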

	\begin{proof}
        If $M^{\downarrow}$ is a finite-dimensional compression of $M$, then $W(M^{\downarrow})\subset W(M)$ and therefore $W(M^{\downarrow})^{-}\subset\Omega$. Hence by Proposition \ref{compressions} we can reduce to the case where $H$ is finite-dimensional.
        
		Let $\gamma\colon A(\Omega)\to\mathscr{L}(H)$ be the functional calculus of $M$ on $\Omega$. Let $\Phi\colon A(\Omega)\to A(\Omega)$ be the  conjugate Cauchy transform on $\Omega$. The convexity of $\Omega$ and the inclusion $W(M)\subset\Omega$ imply that (\ref{infBoundW}) holds. If we now use Proposition \ref{existence extremal measure} to find an extremal measure $\mu$ for an extremal pair $(f_{0},x_{0})$ of $\gamma$, then we can estimate
		\begin{align*}
			|\langle\gamma(\Phi(f_{0})f_{0})x_{0},x_{0}\rangle|&=\bigg|\int_{\Omega^{\partial}}\Phi(f_{0})(z)f_{0}(z) \ \mathrm{d}_{\mu}z\bigg|\\
   &\leq\|\Phi(f_{0})\|_{\Omega^{\partial}}\|f_{0}\|_{\Omega^{\partial}}\leq\|\Phi\|\leq1,
		\end{align*}
		where we used in the last step that $\Phi$ is contractive by convexity of $\Omega$. Thus an application of Theorem \ref{main} yields the desired result.
	\end{proof}
 \begin{remark}
     In fact, the technique used in \cite{ransford2018remarks} and later in \cite{ostermann2020abstract} shows a slightly stronger statement of Crouzeix--Palencia's result. In fact, it is proved that $1+\sqrt{2}$ is still a spectral constant for $M$ on $\Omega$ after replacing the convexity assumption on $\Omega$ and the inclusion $W(M)\subset\Omega$ by the weaker condition that there is a contractive antilinear map $\Phi\colon A(\Omega)\to A(\Omega)$ for which $\|\gamma_{\Phi}\|\leq2$ holds. This more general result can also be recovered with a proof technique similar to that of Theorem \ref{CP} under the additional assumption that an extremal pair exists.
\end{remark}
	

\begin{remark}
Suppose that $\Omega$ is convex and contains $W(M)^{-}$. Note that, if it were true that
\begin{align*}
\inf_{\omega\in A(\Omega)'}\|\gamma_{\Phi}-\omega\|\leq2-\frac{|\langle\gamma(\Phi(f_{0})f_{0})x_{0},x_{0}\rangle|}{2},
\end{align*}
then  $\|\gamma\|\leq2$ would follow by Theorem \ref{main}. This would establish Crouzeix's conjecture \cite{crouzeix2004bounds}.
\end{remark}
 
	\subsection{Okubo--Ando's Result}
	
	The original proof of the following result, due to Okubo--Ando \cite{okubo1975constants}, was based on dilation theory. A more recent proof strategy involved the estimate $\|\gamma_{\Phi}\|\leq2$ and properties of extremal pairs, see \cite{caldwell2018some}. Following the latter approach, we demonstrate how the result can be obtained from Theorem 5.
	
\begin{theorem}[Okubo--Ando]\label{OA}
	Suppose that $\Omega\subset\mathbb{C}$ is an open disk. If $M$ is a bounded linear operator on a Hilbert space $H$ such that $W(M)^{-}\subset\Omega$, then $2$ is a spectral constant for $M$ on $\Omega$.
\end{theorem}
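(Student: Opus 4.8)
The plan is to apply Theorem \ref{main} with $\Phi$ taken to be the conjugate Cauchy transform on the disk $\Omega$, exactly as in the proof of Theorem \ref{CP}, but to exploit the extra structure available when $\Omega$ is a disk in order to get the sharper constant $2$ instead of $1+\sqrt{2}$. As in the proof of Theorem \ref{CP}, I would first reduce to the finite-dimensional case: a finite-dimensional compression $M^{\downarrow}$ satisfies $W(M^{\downarrow})\subset W(M)$, so $W(M^{\downarrow})^{-}\subset\Omega$, and Proposition \ref{compressions} lets us pass from finite-dimensional compressions back to $M$ (the polynomials are dense in $A(\Omega)$ when $\Omega$ is a disk). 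If $\|\gamma\|\leq1$ we are done trivially, so assume $\|\gamma\|>1$; then by \cite[Theorem 2.1]{crouzeix2004bounds} an extremal pair $(f_0,x_0)$ exists, and Proposition \ref{existence extremal measure} furnishes an extremal probability measure $\mu$.

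The key observation is the one already highlighted in the excerpt: when $\Omega$ is a disk, the range of the conjugate Cauchy transform $\Phi$ consists only of the constant functions. Concretely, $\Phi(f)$ equals the constant $\frac{1}{2\pi i}\int_{\Omega^{\partial}} f(\sigma)^{*}/(\sigma-z)\,\mathrm{d}\sigma$ evaluated at the center, which for a disk is (up to the obvious normalization) the average of $f^{*}$ over the boundary, or equivalently $f^{*}$ evaluated at the center of the disk; in any case $\Phi(f_0)$ is a constant $c\in\mathbb{C}$ with $|c|\leq\|f_0\|_{\Omega^{\partial}}=1$. Consequently $\Phi(f_0)f_0 = c\,f_0$, so that
\begin{align*}
    |\langle\gamma(\Phi(f_0)f_0)x_0,x_0\rangle| = |c|\,|\langle\gamma(f_0)x_0,x_0\rangle| = |c|\cdot 0 = 0,
\end{align*}
where the vanishing of $\langle\gamma(f_0)x_0,x_0\rangle$ comes from Proposition \ref{orthogonal} since $\|\gamma\|>1$. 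Plugging this into Theorem \ref{main}, together with the bound $\inf_{\omega\in A(\Omega)'}\|\gamma_{\Phi}-\omega\|\leq\|\gamma_{\Phi}\|\leq 2$ (which holds here because $\Omega$ is convex and $W(M)\subset\Omega$, as recorded in (\ref{infBoundW})), gives
\begin{align*}
    \|\gamma\| \leq \frac{1}{2}\cdot 2 + \sqrt{\Big(\tfrac{1}{2}\cdot 2\Big)^{2} + 0} = 1 + 1 = 2,
\end{align*}
which is the claimed spectral constant.

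The main obstacle — really the only non-formal point — is to justify cleanly that $\Phi(f)$ is a constant function for every $f\in A(\Omega)$ when $\Omega$ is a disk, and to identify that constant well enough to conclude $\Phi(f_0)f_0$ is a scalar multiple of $f_0$. This is classical (it is the statement that the Cauchy transform of an $H^\infty$-type boundary function reproduces the holomorphic part, leaving the conjugate/anti-holomorphic part collapsed to its mean value on a disk), and the excerpt already cites \cite[Page 205]{remmert1991theory} for it, so I would simply invoke that. One should also note the degenerate possibility $c=0$, in which case the argument is even easier, and the borderline case $\|\gamma\|=1\leq 2$, handled separately at the start. Everything else is a direct substitution into Theorem \ref{main}, so no further estimates or constructions are needed.
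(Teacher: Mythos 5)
Your proposal is correct and follows essentially the same route as the paper: reduce to finite dimensions via Proposition \ref{compressions}, take $\Phi$ to be the conjugate Cauchy transform (constant-valued since $\Omega$ is a disk), use Proposition \ref{orthogonal} to kill the term $\langle\gamma(\Phi(f_0)f_0)x_0,x_0\rangle$, invoke (\ref{infBoundW}), and apply Theorem \ref{main}. The only difference is that you spell out a few minor details (the explicit identification $\Phi(f)=f(\text{center})^*$, the bound $|c|\leq 1$, the degenerate cases) that the paper leaves implicit.
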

	\begin{proof}
        If $M^{\downarrow}$ is a finite-dimensional compression of $M$, then $W(M^{\downarrow})\subset W(M)$ and therefore $W(M^{\downarrow})^{-}\subset\Omega$. Hence by Proposition \ref{compressions} we can reduce to the case where $H$ is finite-dimensional.
         
		Let $\gamma\colon A(\Omega)\to\mathscr{L}(H)$ be the functional calculus of $M$ on $\Omega$. Let $\Phi\colon A(\Omega)\to A(\Omega)$ be the conjugate Cauchy transform on $\Omega$. The convexity of $\Omega$ and the inclusion $W(M)\subset\Omega$ imply that (\ref{infBoundW}) holds. Fix an extremal pair $(f_{0},x_{0})$ for $\gamma$. Because $\Omega$ is a disk, we know that $\Phi(f_{0})$ is constant. Of course we may assume that $\|\gamma\|>1$. So $\langle\gamma(\Phi(f_{0})f_{0})x_{0},x_{0}\rangle=0$ by Proposition \ref{orthogonal}. The result now follows from Theorem \ref{main}.
\end{proof}
\begin{remark}
    Drury \cite{drury2008symbolic} was able to refine Okubo--Ando's result in the case where $\Omega$ is the open unit disk centered at the origin by showing the following: Under the conditions of Theorem \ref{OA}, it holds that 
    \begin{align*}
        \|p(M)\| \leq \nu (|p(0)|)
    \end{align*}
    for all polynomials $p$ with $\|p\|_{\Omega^{\partial}}\leq1$, where $\nu\colon\interval{0}{1}\to\interval{1}{2}$ is an explicitly given strictly decreasing function with $\nu(0)=2$ and $\nu(1)=1$. We point out that our methods do not seem to allow for reproving this stronger statement. This may not be surprising as Dury's technique strongly rests on a theorem by Berger--Stampfli \cite[Theorem 4]{berger1967mapping} stating that, if the numerical range of $M$ is included in the closed unit disk, then the same is true for $p(M)$ provided that $\|p\|_{\Omega^{\partial}}\leq1$ and $p(0)=0$. The latter result was originally proved via dilation theory, see also  \cite[Theorem 3]{putinar2005skew} for a proof relating to the double-layer potential. There also is a more recent proof of Berger--Stampfli's result using Blaschke products, \cite[Theorem 2.1]{klaja2016mapping}.
\end{remark}
\subsection{Von Neumann's Inequality}
The following well-known result, due to von Neumann \cite{neumann1950spektraltheorie}, has its origins already in 1950. The proof presented below combines Theorem \ref{main} with an idea from \cite[Lemma 6]{crouzeix2019spectral}, which in turn can be traced back to \cite{delyon1999generalization}. In what follows we write $\mathbb{T}$ for the unit circle in $\mathbb{C}$. 
\begin{theorem}[von Neumann]\label{vN}     
	Suppose that $\Omega\subset\mathbb{C}$ is an open disk centered at the origin. If $M$ is a bounded linear operator on a Hilbert space $H$ such that $\|M\|\mathbb{T}\subset\Omega$, then $1$ is a spectral constant for $M$ on $\Omega$.
\end{theorem}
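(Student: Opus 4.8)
The plan is to apply Theorem~\ref{main} with $\Omega$ a disk centered at the origin, $M$ satisfying $\|M\|\mathbb{T}\subset\Omega$, and $\Phi$ the conjugate Cauchy transform, while squeezing the ``defect term'' $\inf_{\omega}\|\gamma_\Phi-\omega\|$ down to $0$. First I would reduce to the finite-dimensional case via Proposition~\ref{compressions}; this is legitimate since polynomials are dense in $A(\Omega)$ for a disk, and $\|M^{\downarrow}\|\le\|M\|$ so the hypothesis $\|M^{\downarrow}\|\mathbb{T}\subset\Omega$ is inherited by compressions. Then I would fix an extremal pair $(f_0,x_0)$ for $\gamma$, which exists in finite dimensions by \cite[Theorem 2.1]{crouzeix2004bounds}, and assume for contradiction (or just to have something to work with) that $\|\gamma\|>1$, so Proposition~\ref{orthogonal} applies.

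The key point is to control the double-layer potential on a \emph{circle centered at the origin} containing $\|M\|\mathbb{T}$ in its interior. Write $\Omega$ as the disk of radius $\rho>\|M\|$. On $\Omega^{\partial}=\rho\mathbb{T}$ the outward normal at $\sigma$ is $\sigma/\rho$, so
\begin{align*}
    \frac{v(\sigma)}{2\pi}(\sigma-M)^{-1}=\frac{\sigma}{2\pi\rho}(\sigma-M)^{-1}=\frac{1}{2\pi\rho}\Big(I+M(\sigma-M)^{-1}\Big)=\frac{1}{2\pi\rho}I+\frac{1}{2\pi\rho}M(\sigma-M)^{-1}.
\end{align*}
Hence $P(\sigma)=\frac{1}{\pi\rho}I+R(\sigma)$ where $R(\sigma)=\frac{1}{2\pi\rho}\big(M(\sigma-M)^{-1}+M(\sigma-M)^{-1}{}^{*}\big)$. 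The idea borrowed from \cite[Lemma 6]{crouzeix2019spectral} (ultimately \cite{delyon1999generalization}) is that the Cauchy-type kernel $\sigma\mapsto M(\sigma-M)^{-1}$ integrates to something exact around the circle: integrating $R(\sigma_c(s))$ over the full circle, the holomorphic piece $\frac{1}{2\pi\rho}M(\sigma-M)^{-1}$ contributes, by Cauchy's theorem applied to the resolvent (its singularities lie inside $\|M\|\mathbb{T}$), a term that is canceled against its adjoint's contribution — more carefully, $\int_0^{\ell}\frac{\sigma_c'(s)}{i}\cdot\frac{1}{2\pi\rho}M(\sigma_c(s)-M)^{-1}\,\mathrm{d}s$ can be rewritten as a contour integral and evaluated. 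The net effect should be that $\lambda_{\min}(P(\sigma))\ge \frac{1}{\pi\rho}-\|R(\sigma)\|$ is \emph{not} the right route; instead I would directly compute $\omega$ from Proposition~\ref{infBound} or, better, observe that since $\Phi(f_0)$ is constant (disk case, as in Theorem~\ref{OA}), Proposition~\ref{orthogonal} forces $\langle\gamma(\Phi(f_0)f_0)x_0,x_0\rangle=0$, so Theorem~\ref{main} collapses to $\|\gamma\|\le\frac12\inf_\omega\|\gamma_\Phi-\omega\|$. It therefore suffices to show this infimum is at most $2$ — but that only gives $\|\gamma\|\le1$ if the infimum is exactly $\le 2$, which \eqref{infBoundW} already provides. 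So in fact von Neumann follows the same way as Okubo--Ando \emph{once we know} the infimum bound $2$; the gain to $1$ is automatic from the vanishing of the quadratic term.

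Wait — that reasoning would give $\|\gamma\|\le1$ for every disk containing $W(M)^{-}$, contradicting Okubo--Ando's sharp constant $2$. The resolution, and the real content, is that in the Okubo--Ando setting the bound \eqref{infBoundW} is used as $\le 2$ giving $\|\gamma\|\le 2$, whereas here I must sharpen it: the plan is to show $\inf_{\omega\in A(\Omega)'}\|\gamma_\Phi-\omega\|\le 2$ is not enough, and instead prove via Proposition~\ref{infBound} that $\inf_\omega\|\gamma_\Phi-\omega\|\le 2-\int_{\rho\mathbb{T}}\lambda_{\min}(P(\sigma))\,\mathrm{d}s$ together with the computation above showing $\int_{\rho\mathbb{T}}\lambda_{\min}(P(\sigma))\,\mathrm{d}s = 2$ exactly, because the non-scalar part $R(\sigma)$ has mean contribution that makes $\lambda_{\min}(P(\sigma))$ integrate to the full mass $2$ — equivalently, $P(\sigma)=\frac{1}{\pi\rho}(I+\text{(positive part averaging out)})$ and the Delyon--Delyon computation shows the $\lambda_{\min}$ lower bound is attained in the integral. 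Concretely I would show $\frac{\sigma}{2\pi\rho}(\sigma-M)^{-1}+\big(\tfrac{\sigma}{2\pi\rho}(\sigma-M)^{-1}\big)^{*}\ge \frac{1}{2\pi\rho}\,\mathrm{Re}\,\frac{\sigma+M}{\sigma-M}\cdot\frac1{\pi}$-type expression whose $\lambda_{\min}$, integrated in $s=\rho\theta$ over $\theta\in[0,2\pi)$, equals $2$ by evaluating the scalar harmonic integral $\frac1{2\pi}\int_0^{2\pi}\mathrm{Re}\frac{\rho e^{i\theta}+z}{\rho e^{i\theta}-z}\,\mathrm{d}\theta=1$ for $|z|<\rho$ and using that $\|M\|<\rho$ makes the relevant quadratic form uniformly bounded below by its scalar (Poisson-kernel) value. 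Then Proposition~\ref{infBound} gives $\inf_\omega\|\gamma_\Phi-\omega\|\le 0$, hence $=0$, and Theorem~\ref{main} with the vanishing quadratic term yields $\|\gamma\|\le 1$; since $\|\gamma\|\ge1$ always, we get equality and $1$ is a spectral constant.

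The main obstacle will be the harmonic-analysis computation showing $\int \lambda_{\min}(P(\sigma))\,\mathrm{d}s = 2$ — i.e.\ that, for $\|M\|<\rho$, the Hermitian operator $\mathrm{Re}\big(\tfrac{\sigma}{\rho}(\sigma-M)^{-1}\big)$ is bounded below pointwise by the scalar Poisson kernel $\tfrac{1}{2\pi}\,\tfrac{\rho^2-|z|^2}{|\rho-\bar\sigma z|^2}$ evaluated appropriately (heuristically, ``$z$'' is replaced by the operator $M$ and one uses $\|M\|<\rho$ to stay in the region of positivity), and that this lower bound integrates to the total mass $2$, forcing $\lambda_{\min}$ to do the same up to the slack already absorbed. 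Making the operator-valued Poisson/Herglotz estimate rigorous — rather than the scalar identity $\frac{1}{2\pi}\int_0^{2\pi}\frac{\rho^2-r^2}{\rho^2-2\rho r\cos\theta+r^2}\,\mathrm{d}\theta=1$ which is elementary — is where the argument from \cite[Lemma 6]{crouzeix2019spectral} and \cite{delyon1999generalization} does the work, and I would cite/adapt it rather than rederive it. Once that integral identity is in hand, everything else is a direct substitution into Proposition~\ref{infBound}, Proposition~\ref{orthogonal}, and Theorem~\ref{main}, exactly parallel to the proofs of Theorems~\ref{CP} and~\ref{OA}.
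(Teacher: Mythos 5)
Your high-level strategy matches the paper's — reduce to finite dimensions, invoke Proposition~\ref{orthogonal} to kill the term $\langle\gamma(\Phi(f_0)f_0)x_0,x_0\rangle$ because $\Phi(f_0)$ is constant on a disk, and then sharpen the bound $\inf_\omega\|\gamma_\Phi-\omega\|\leq 2$ via Proposition~\ref{infBound} by exploiting a pointwise lower bound on the double-layer potential — but two concrete errors derail the middle and end of the argument.

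First, when the quadratic term vanishes, Theorem~\ref{main} yields
\begin{align*}
\|\gamma\|\leq\tfrac12\inf_\omega\|\gamma_\Phi-\omega\|+\sqrt{\bigl(\tfrac12\inf_\omega\|\gamma_\Phi-\omega\|\bigr)^2}=\inf_\omega\|\gamma_\Phi-\omega\|,
\end{align*}
not $\frac12\inf_\omega\|\gamma_\Phi-\omega\|$ as you wrote. This miscomputation is what generated your apparent paradox (``then Okubo--Ando would already give $\|\gamma\|\leq 1$''); in reality Okubo--Ando follows from $\|\gamma\|\leq\inf\leq 2$, and to get von Neumann you need to push the infimum down from $2$ to $1$, not to $0$. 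Second, because you overcorrected to aim for $\inf=0$, you set yourself the false target $\int\lambda_{\min}(P(\sigma(s)))\,\mathrm{d}s=2$. That would force $P(\sigma)$ to be a scalar multiple of the identity for a.e.\ $\sigma$ (the minimum eigenvalue absorbing all of the total mass $\int P = 2$), which is false for non-normal $M$. The correct and achievable target is $\int\lambda_{\min}(P(\sigma(s)))\,\mathrm{d}s\geq 1$, giving $\inf\leq 2-1=1$ and hence $\|\gamma\|\leq 1$, contradicting the standing assumption $\|\gamma\|>1$. The paper obtains this via the clean operator identity
\begin{align*}
2\pi r\,P(\sigma(s))-1=(\sigma(s)^{*}-M^{*})^{-1}(r^{2}-M^{*}M)(\sigma(s)-M)^{-1}\geq 0
\end{align*}
(valid since $\|M\|<r$), which gives $\lambda_{\min}(P(\sigma(s)))\geq\frac{1}{2\pi r}$ pointwise and integrates to exactly $1$ over the circle of circumference $2\pi r$. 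Your own decomposition $P=\frac{1}{\pi\rho}I+R$ is correct as far as it goes, and the estimate you want is equivalent to $R(\sigma)\geq-\frac{1}{2\pi\rho}$, but you abandon this line in favor of a Poisson-kernel heuristic you do not make precise and which is, in any case, targeted at the wrong value of the integral. Fixing the algebra in the collapse of Theorem~\ref{main} and replacing the goal ``$\int\lambda_{\min}=2$'' with ``$\lambda_{\min}\geq\frac{1}{2\pi r}$'' (established by the displayed identity) would bring your proposal into line with the paper's proof.
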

\begin{proof}
    If $M^{\downarrow}$ is a finite-dimensional compression of $M$, then $\|M^{\downarrow}\|\leq\|M\|$ and therefore $\|M^{\downarrow}\|\mathbb{T}\subset\Omega$. Hence by Proposition \ref{compressions} we can reduce to the case where $H$ is finite-dimensional.
    
	Let $\gamma\colon A(\Omega)\to\mathscr{L}(H)$ be the functional calculus of $M$ on $\Omega$ and assume to the contrary that $\|\gamma\|>1$. Let $r$ denote the radius of the disk $\Omega$ and consider the arc length parametrization $\sigma\colon\interval{0}{2\pi r}\to\mathbb{C}$ given by $\sigma(s):=re^{is/r}$ for $s\in\interval{0}{2\pi r}$. Let $\Phi\colon A(\Omega)\to A(\Omega)$ be the conjugate Cauchy transform on $\Omega$. Fix an extremal pair $(f_{0},x_{0})$ for $\gamma$. Since $\Phi(f_{0})$ is constant and $\|\gamma\|>1$, we have $\langle\gamma(\Phi(f_{0})f_{0})x_{0},x_{0}\rangle=0$ by Proposition \ref{orthogonal}. Moreover, an elementary computation shows that
	\begin{align*}
		2\pi rP(\sigma(s))-1=(\sigma(s)^{*}-M^{*})^{-1}(r^{2}-M^{*}M)(\sigma(s)-M)^{-1}\geq0
	\end{align*}
	for all $s\in\interval{0}{2\pi r}$. It now follows from Theorem \ref{main} and Proposition \ref{infBound} that
	\begin{align*}
		\|\gamma\|&\leq\inf_{\omega\in A(\Omega)'}\|\gamma_{\Phi}-\omega\|\\
  &\leq2-\int_{0}^{2\pi r}\lambda_{\minn}(P(\sigma(s))) \ \mathrm{d}s\\
  &\leq2-\int_{0}^{2\pi r}\frac{1}{2\pi r} \ \mathrm{d}s=1,
	\end{align*}
	which contradicts the assumption $\|\gamma\|>1$. Hence we must have $\|\gamma\|=1$.
\end{proof}
\subsection{Crouzeix--Greenbaum's Result}
Using an extremal measure $\mu$ for an extremal pair $(f_{0},x_{0})$ of $\gamma$ we find that
\begin{align*}
   |\langle\gamma(\Phi(f_{0})f_{0})x_{0},x_{0}\rangle|\leq\|\Phi\|,
\end{align*} 
see the proof of Theorem \ref{CP} for a similar reasoning. This directly yields the following weaker version of Theorem \ref{main} to reprove a statement from Crouzeix--Greenbaum \cite{crouzeix2019spectral}.
\begin{corollary}\label{CG}
Let $\Omega\subset\mathbb{C}$ be a smoothly bounded open subset and $\Phi\colon A(\Omega)\to A(\Omega)$ a bounded antilinear map. Suppose that $M$ is a bounded linear operator on a Hilbert space $H$ with spectrum inside of $\Omega$. Let $\gamma\colon A(\Omega)\to\mathscr{L}(H)$ denote the functional calculus of $M$ on $\Omega$ and suppose that it admits an extremal pair. Moreover, let $\omega\colon A(\Omega)\to\mathbb{C}$ be any bounded linear functional. If $\|\gamma\|>1$, then
\begin{align*}
\|\gamma\|\leq\|\gamma_{\Phi}-\omega\|+\sqrt{\|\gamma_{\Phi}-\omega\|^{2}+\|\Phi\|+\|\omega\|}.
\end{align*}
\end{corollary}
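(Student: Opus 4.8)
The plan is to derive Corollary \ref{CG} directly from Theorem \ref{main} by first weakening the term $|\langle\gamma(\Phi(f_0)f_0)x_0,x_0\rangle|$ appearing under the square root, and then weakening the infimum. First I would invoke Proposition \ref{existence extremal measure}: since $\gamma$ admits an extremal pair $(f_0,x_0)$, there is an extremal probability measure $\mu$ on $\Omega^\partial$ with $\langle\gamma(g)x_0,x_0\rangle=\int_{\Omega^\partial}g\,\mathrm{d}_\mu z$ for all $g\in A(\Omega)$. Applying this to $g=\Phi(f_0)f_0$ and using that $\mu$ is a probability measure, together with $\|f_0\|_{\Omega^\partial}=1$, gives
\begin{align*}
|\langle\gamma(\Phi(f_0)f_0)x_0,x_0\rangle|\leq\|\Phi(f_0)f_0\|_{\Omega^\partial}\leq\|\Phi(f_0)\|_{\Omega^\partial}\leq\|\Phi\|,
\end{align*}
exactly as in the proof of Theorem \ref{CP}.

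Next I would compare with the general bound. For the fixed $\omega\in A(\Omega)'$ in the statement, Theorem \ref{main} gives
\begin{align*}
\|\gamma\|\leq\tfrac12\inf_{\omega'}\|\gamma_\Phi-\omega'\|+\sqrt{\big(\tfrac12\inf_{\omega'}\|\gamma_\Phi-\omega'\|\big)^2+|\langle\gamma(\Phi(f_0)f_0)x_0,x_0\rangle|},
\end{align*}
so bounding the infimum over all functionals by the single choice $\omega'=\omega$ and substituting the estimate above yields
\begin{align*}
\|\gamma\|\leq\tfrac12\|\gamma_\Phi-\omega\|+\sqrt{\tfrac14\|\gamma_\Phi-\omega\|^2+\|\Phi\|}.
\end{align*}
Since $\tfrac12 t\leq t$ and $\tfrac14 t^2\leq t^2$ for $t\geq0$, and since $\|\Phi\|\leq\|\Phi\|+\|\omega\|$, this is dominated by $\|\gamma_\Phi-\omega\|+\sqrt{\|\gamma_\Phi-\omega\|^2+\|\Phi\|+\|\omega\|}$, which is precisely the claimed inequality.

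There is essentially no obstacle here; the statement is deliberately a slackened form of Theorem \ref{main}, and the only substantive input is the extremal-measure estimate on $|\langle\gamma(\Phi(f_0)f_0)x_0,x_0\rangle|$ which is recorded immediately before the corollary. The one point requiring a word of care is why the harmless-looking extra summand $\|\omega\|$ is present at all: it plays no role in the derivation from Theorem \ref{main} but is included so that the inequality matches the form in which Crouzeix--Greenbaum state their result, where the rank-one correction $\omega$ enters the bound with its own norm; in our argument it is simply added on for free. I would therefore keep the proof to these few lines, citing the proof of Theorem \ref{CP} for the measure estimate and Theorem \ref{main} for the rest.
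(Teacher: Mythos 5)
Your proof is correct and follows exactly the route the paper intends: bound $|\langle\gamma(\Phi(f_0)f_0)x_0,x_0\rangle|\le\|\Phi\|$ via the extremal (probability) measure from Proposition~\ref{existence extremal measure}, bound the infimum in Theorem~\ref{main} by the given $\omega$, and then relax the factors $1/2$ and $1/4$ and add the superfluous $\|\omega\|$ to match the Crouzeix--Greenbaum form. The paper gives essentially no more detail than this, and even makes your closing observation explicitly, noting that $\|\omega\|$ is an artifact of the original proof technique and can be dropped.
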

Apart from the assumption that $\gamma$ admits an extremal pair,  Corollary \ref{CG} is precisely \cite[Theorem 2]{crouzeix2019spectral}. Our main result shows that the term $\|\omega\|$ can be dropped and seems to be an artifact of the proof technique used in \cite{crouzeix2019spectral}. However, in most applications presented in \cite{crouzeix2019spectral} the functional $\omega$ is chosen to be $0$ anyway.

\section{Relation to Numerical Radii}
 The \textit{numerical radius} of a bounded linear operator on $M$ on $H$ is given by
\begin{align*}
    w(M):=\sup_{z\in W(M)}|z|
\end{align*}
 and defines a norm on $\mathscr{L}(H)$ that is known to be equivalent to the operator norm on $\mathscr{L}(H)$. Recall our general framework where $A$ is a unital uniform algebra acting on a compact Hausdorff space $K$ and $\gamma\colon A\to\mathscr{L}(H)$ a unital bounded homomorphism. Badea--Crouzeix--Klaja \cite{badea2018spectral}, see also \cite{davidson2018complete}, it was shown that the induced numerical radius operator norm of $\gamma$, which we denote by $\|\gamma\|_{w}$, is explicitly linked to $\|\gamma\|$ via
\begin{align*}
    2\|\gamma\|_{w}=\frac{1}{\|\gamma\|}+\|\gamma\|.
\end{align*}
Hence bounds on $\|\gamma\|_{w}$ give bounds on $\|\gamma\|$ and vice versa. Also note that an analogous notion of extremal pairs and associated measure can be defined in this setting, see \cite{bickel2020crouzeix}. A variant of Proposition \ref{existence extremal measure}  using this different norm can be proved easily in a similar fashion.
\bibliographystyle{alpha}
\bibliography{Bibliography1}

\newcommand{\etalchar}[1]{$^{#1}$}
\begin{thebibliography}{BGG{\etalchar{+}}20}

\bibitem[BCK18]{badea2018spectral}
C.~Badea, M.~Crouzeix, and H.~Klaja.
\newblock Spectral {S}ets and {O}perator {R}adii.
\newblock {\em Bulletin of the London Mathematical Society}, 50(6):986--996,
  2018.

\bibitem[BGG{\etalchar{+}}20]{bickel2020crouzeix}
K.~Bickel, P.~Gorkin, A.~Greenbaum, T.~Ransford, F.L. Schwenninger, and
  E.~Wegert.
\newblock Crouzeix’s {C}onjecture and {R}elated {P}roblems.
\newblock {\em Computational Methods and Function Theory}, 20(3):701--728,
  2020.

\bibitem[BS67]{berger1967mapping}
C.A. Berger and J.G. Stampfli.
\newblock Mapping {T}heorems for the {N}umerical {R}ange.
\newblock {\em American Journal of Mathematics}, 89(4):1047--1055, 1967.

\bibitem[CG19]{crouzeix2019spectral}
M.~Crouzeix and A.~Greenbaum.
\newblock Spectral {S}ets: {N}umerical {R}ange and {B}eyond.
\newblock {\em SIAM Journal on Matrix Analysis and Applications},
  40(3):1087--1101, 2019.

\bibitem[CGL18]{caldwell2018some}
T.~Caldwell, A.~Greenbaum, and K.~Li.
\newblock Some {E}xtensions of the {C}rouzeix--{P}alencia {R}esult.
\newblock {\em SIAM Journal on Matrix Analysis and Applications},
  39(2):769--780, 2018.

\bibitem[COR20]{ostermann2020abstract}
R.~Clouâtre, M.~Ostermann, and T.~Ransford.
\newblock An {A}bstract {A}pproach to the {C}rouzeix {C}onjecture.
\newblock 2020.
\newblock Available at arXiv: 2011.10422, to appear in \textit{Journal of
  Operator Theory}.

\bibitem[CP17]{crouzeix2017numerical}
M.~Crouzeix and C.~Palencia.
\newblock The {N}umerical {R}ange is a $(1+\sqrt{2})$-{S}pectral {S}et.
\newblock {\em SIAM Journal on Matrix Analysis and Applications},
  38(2):649--655, 2017.

\bibitem[Cro04]{crouzeix2004bounds}
M.~Crouzeix.
\newblock Bounds for {A}nalytical {F}unctions of {M}atrices.
\newblock {\em Integral Equations and Operator Theory}, 48(4):461--477, 2004.

\bibitem[Cro07]{crouzeix2007numerical}
M.~Crouzeix.
\newblock Numerical {R}ange and {F}unctional {C}alculus in {H}ilbert {S}pace.
\newblock {\em Journal of Functional Analysis}, 244(2):668--690, 2007.

\bibitem[DD99]{delyon1999generalization}
B.~Delyon and F.~Delyon.
\newblock Generalization of von {N}eumann's {S}pectral {S}ets and {I}ntegral
  {R}epresentation of {O}perators.
\newblock {\em Bulletin de la Soci{\'e}t{\'e} Math{\'e}matique de France},
  127(1):25--41, 1999.

\bibitem[DPW18]{davidson2018complete}
K.~Davidson, V.~Paulsen, and H.~Woerdeman.
\newblock Complete {S}pectral {S}ets and {N}umerical {R}ange.
\newblock {\em Proceedings of the American Mathematical Society},
  146(3):1189--1195, 2018.

\bibitem[Dru08]{drury2008symbolic}
S.W. Drury.
\newblock Symbolic {C}alculus of {O}perators with {U}nit {N}umerical {R}adius.
\newblock {\em Linear algebra and its applications}, 428(8-9):2061--2069, 2008.

\bibitem[DS58]{dunfordschwartzVol1}
N.~Dunford and J.T. Schwartz.
\newblock {\em Linear {O}perators. {I}. {G}eneral {T}heory}.
\newblock Pure and Applied Mathematics, Vol. 7. Interscience Publishers, Inc.,
  New York; Interscience Publishers Ltd., London, 1958.
\newblock With the assistance of W. G. Bade and R. G. Bartle.

\bibitem[GG71]{gamelin1971pointwise}
T.W. Gamelin and J.~Garnett.
\newblock Pointwise {B}ounded {A}pproximation and {D}irichlet {A}lgebras.
\newblock {\em Journal of Functional Analysis}, 8(3):360--404, 1971.

\bibitem[Haa18]{haase2018lectures}
M.~Haase.
\newblock Lectures on {F}unctional {C}alculus, 2018.
\newblock \textit{21st International Internet Seminar}, Kiel University,
  \href{https://www.math.uni-kiel.de/isem21/en/course/phase1/isem21-lectures-on-functional-calculus}{www.math.uni-kiel.de/isem21/en/course/phase1/}.

\bibitem[HL21]{hartz2021dilation}
M.~Hartz and M.~Lupini.
\newblock Dilation {T}heory in {F}inite {D}imensions and {M}atrix {C}onvexity.
\newblock {\em Israel Journal of Mathematics}, 245(1):39--73, 2021.

\bibitem[KMR16]{klaja2016mapping}
H.~Klaja, J.~Mashreghi, and T.~Ransford.
\newblock On {M}apping {T}heorems for {N}umerical {R}ange.
\newblock {\em Proceedings of the American Mathematical Society},
  144(7):3009--3018, 2016.

\bibitem[MW97]{milnor1997topology}
J.~Milnor and D.W. Weaver.
\newblock {\em Topology from the {D}ifferentiable {V}iewpoint}, volume~21.
\newblock Princeton university press, 1997.

\bibitem[Neu50]{neumann1950spektraltheorie}
J.~von Neumann.
\newblock Eine {S}pektraltheorie f{\"u}r {A}llgemeine {O}peratoren eines
  {U}nit{\"a}ren {R}aumes.
\newblock {\em Mathematische Nachrichten}, 4(1-6):258--281, 1950.

\bibitem[OA75]{okubo1975constants}
K.~Okubo and T.~Ando.
\newblock Constants {R}elated to {O}perators of {C}lass ${C}_{\rho}$.
\newblock {\em Manuscripta Mathematica}, 16(4):385--394, 1975.

\bibitem[Pau02]{paulsen2002completely}
V.~Paulsen.
\newblock {\em Completely {B}ounded {M}aps and {O}perator {A}lgebras}.
\newblock Number~78. Cambridge University Press, 2002.

\bibitem[PPW20]{Pagacz2020}
P.~Pagacz, P.~Pietrzycki, and M.~Wojtylak.
\newblock Between the von {N}eumann {I}nequality and the {C}rouzeix
  {C}onjecture.
\newblock {\em Linear Algebra Appl.}, 605:130--157, 2020.

\bibitem[PS05]{putinar2005skew}
M.~Putinar and S.~Sandberg.
\newblock A {S}kew {N}ormal {D}ilation on the {N}umerical {R}ange of an
  {O}perator.
\newblock {\em Mathematische Annalen}, 331(2):345--357, 2005.

\bibitem[Rem91]{remmert1991theory}
R.~Remmert.
\newblock {\em Theory of {C}omplex {F}unctions}, volume 122.
\newblock Springer Science \& Business Media, 1991.

\bibitem[RS18]{ransford2018remarks}
T.~Ransford and F.L. Schwenninger.
\newblock Remarks on the {C}rouzeix--{P}alencia {P}roof that the {N}umerical
  {R}ange is a $(1+\sqrt{2})$-{S}pectral {S}et.
\newblock {\em SIAM Journal on Matrix Analysis and Applications},
  39(1):342--345, 2018.

\end{thebibliography}

\end{document}